\numberwithin{equation}{section}
\newtheorem{theorem}{Theorem}[section]
\newtheorem{lemma}[theorem]{Lemma}
\theoremstyle{definition}
\newtheorem{remark}[theorem]{Remark}
\renewcommand{\Im}{\operatorname{Im}}
\DeclareMathOperator{\Arg}{Arg}
\DeclareMathOperator{\sign}{sign}
\DeclareMathOperator{\ind}{\mathds{1}}
\newcommand{\sub}{\subseteq}
\newcommand{\pr}{\mathds{P}}
\newcommand{\ex}{\mathds{E}}
\newcommand{\C}{\mathds{C}}
\newcommand{\R}{\mathds{R}}
\newcommand{\Z}{\mathds{Z}}
\newcommand{\N}{\mathds{N}}
\newcommand{\ph}{\varphi}
\renewcommand{\le}{\leqslant}
\renewcommand{\ge}{\geqslant}
\NewDocumentCommand{\formula}{ssom}{%
 \IfBooleanTF{#1}{%
  \IfBooleanTF{#2}{%
   \IfValueTF{#3}%
    {\begin{align}\label{#3}\begin{gathered}#4\end{gathered}\end{align}}%
    {\begin{gather}#4\end{gather}}%
  }{%
   \IfValueTF{#3}%
    {\begin{align}\label{#3}\begin{aligned}#4\end{aligned}\end{align}}%
    {\begin{gather*}#4\end{gather*}}%
  }%
 }{%
  \IfValueTF{#3}%
   {\begin{align}\label{#3}#4\end{align}}%
   {\begin{align*}#4\end{align*}}%
 }%
}
\newcommand{\ignore}[1]{}
\begin{document}

\title[Bell-shaped sequences]{Bell-shaped sequences}
\author{Mateusz Kwaśnicki, Jacek Wszoła}
\thanks{Work supported by the Polish National Science Centre (NCN) grant no.\@ 2019/33/B/ST1/03098}
\address{Mateusz Kwaśnicki, Jacek Wszoła \\ Department of Pure Mathematics \\ Wrocław University of Science and Technology \\ ul. Wybrzeże Wyspiańskiego 27 \\ 50-370 Wrocław, Poland}
\email{mateusz.kwasnicki@pwr.edu.pl, 255718@student.pwr.edu.pl}
\date{\today}
\keywords{Bell-shaped sequence, Pólya frequency sequence, completely monotone sequence, generating function, Pick function}
\subjclass[2010]{%
 40A05, %Convergence and divergence of series and sequences
 26A51, %Convexity of real functions in one variable, generalizations
 39A70, %Difference operators [See also 47B39]
 60E10, %Characteristic functions; other transforms
 60E07  %Infinitely divisible distributions; stable distributions
}

\begin{abstract}
A nonnegative real function $f$ is said to be bell-shaped if it converges to zero at $\pm\infty$ and the $n$th derivative of $f$ changes sign $n$ times for every $n = 0, 1, 2, \ldots$\, In a similar way, we may say that a nonnegative sequence $a_k$ is bell-shaped if it converges to zero and the $n$th iterated difference of $a_k$ changes sign $n$ times for every $n = 0, 1, 2, \ldots$\, Bell-shaped functions were recently characterised by Thomas Simon and the first author. In the present paper we provide an analogous description of one-sided bell-shaped sequences. More precisely, we identify one-sided bell-shaped sequences with convolutions of Pólya frequency sequences and completely monotone sequences, and we characterise the corresponding generating functions as exponentials of appropriate Pick functions.
\end{abstract}

\maketitle

%
%                            ---------- o ----------
%

\section{Introduction}
\label{sec:intro}

A nonnegative real function $f$ is said to be \emph{bell-shaped} if it is smooth, it converges to zero at $\pm\infty$, and for every $n = 0, 1, 2, \ldots$ the $n$th derivative $f^{(n)}$ changes sign exactly $n$ times. According to~\cite{karlin}, this notion of bell-shaped functions was introduced in~1940s in the study of statistical games. Examples of bell-shaped functions include the densities of the normal distribution $\pi^{-1/2} \exp(-x^2)$, the Cauchy distribution $\pi^{-1} (1 + x^2)^{-1}$, and, more generally, stable distributions. The last claim become a popular conjecture after an incorrect proof appeared in~1983 in~\cite{gawronski}, and it was eventually proved in~\cite{kwasnicki}, extending a partial result due to Simon in~\cite{simon}. There are no compactly supported bell-shaped functions: this conjecture due to Schoenberg was proved already in~1950 by Hirschman in~\cite{hirschman}. However, many \emph{one-sided} functions, that is, functions supported in a half-line, are bell-shaped. Examples include the density functions of the Lévy distribution $\pi^{-1/2} x^{-3/2} e^{-1/x} \ind_{(0, \infty)}(x)$ and, more generally, of hitting times of 1-D diffusion processes; see~\cite{js}. The class of bell-shaped functions was completely characterised in~\cite{ks}, and it proved to be related to total positivity, infinite divisibility and the theory of Pick functions.

The concept of bell-shaped functions has its obvious discrete analogue: \emph{bell-shaped sequences}. A two-sided nonnegative sequence $(a_k)$ (with $k \in \Z$) is said to be \emph{bell-shaped}, if it converges to zero at $\pm\infty$ and for every $n = 0, 1, 2, \ldots$ the $n$th iterated difference $(\Delta^n a_k)$ changes sign exactly $n$ times. A one-sided sequence is bell-shaped if the corresponding two-sided sequence, obtained by padding zeroes for negative indices, is bell-shaped. Although the notion of a bell-shaped sequence seems very natural, apparently it has not yet appeared in mathematical literature.

It is rather straightforward to verify that geometric sequences and, more generally, completely monotone sequences are bell-shaped. In Section~\ref{sec:examples} we provide a few more classes of bell-shaped sequences. The main purpose of this article is to prove the following characterisation theorem, which is a discrete analogue (for one-sided sequences) of the complete description of the class of bell-shaped functions developed in~\cite{js,kwasnicki,ks}.

We say that a function $\ph$ is \emph{increasing-after-rounding} if there is an increasing integer-valued function $\tilde{\ph}$ such that $\tilde{\ph} \le \ph \le \tilde{\ph} + 1$. Note that if $\lfloor \ph \rfloor$ or $\lceil \ph \rceil$ is increasing, then $\ph$ is increasing-after-rounding, but the converse is not quite true: the latter condition is slightly more general.

\begin{theorem}\label{thm:main}
For a nonnegative sequence $(a_k)$, the following are equivalent:
\begin{enumerate}[label={\rm(\alph*)}]
\item\label{thm:main:a} $(a_k)$ is a bell-shaped sequence;
\item\label{thm:main:b} $(a_k)$ is the convolution of a summable Pólya frequency sequence and a completely monotone sequence which converges to zero;
\item\label{thm:main:c} the generating function of $(a_k)$ is given by the formula
\formula[eq:main]{
 \sum_{k = 0}^\infty a_k x^k & = \exp \biggl( b x + c + \int_{-\infty}^\infty \biggl( \frac{1}{s - x} - \frac{s}{1 + s^2} \biggr) \ph(s) ds \biggr)
}
for $x \in (0, 1)$, where $b \in [0, \infty)$, $c \in \R$ and $\ph$ is a nonnegative Borel function on $\R$ such that:
 \begin{itemize}
 \item $\ph$ is decreasing and integer-valued on $(-\infty, 0)$;
 \item $\ph$ is equal to zero on $[0, 1]$;
 \item $\ph$ is increasing-after-rounding on $(1, \infty)$;
 \item $\ph(s) / s^2$ is integrable near $-\infty$ and near $\infty$;
 \item $(1 - \ph(s)) / (1 - s)$ is nonintegrable in a right neighbourhood of $1$.
 \end{itemize}
\end{enumerate}
Furthermore, the right-hand side of~\eqref{eq:main} is the generating function of a bell-shaped sequence whenever the conditions on $b, c, \ph$ listed in item~\ref{thm:main:c} are satisfied.
\end{theorem}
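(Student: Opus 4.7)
My strategy would be to prove the cyclic chain $\ref{thm:main:b} \Rightarrow \ref{thm:main:c} \Rightarrow \ref{thm:main:a} \Rightarrow \ref{thm:main:b}$, with the ``furthermore'' clause of the theorem absorbed into the two implications ending at~\ref{thm:main:a}.

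The implication $\ref{thm:main:b} \Rightarrow \ref{thm:main:c}$ is essentially a bookkeeping exercise combining two classical representation theorems. By the Schoenberg--Edrei theorem, a summable Pólya frequency sequence has generating function $e^{\gamma x} \prod_i (1 + \alpha_i x) / \prod_j (1 - \beta_j x)$, and by the Hausdorff moment theorem a completely monotone sequence converging to zero has generating function $\int_{(0, 1)} (1 - tx)^{-1} \mu(dt)$. Taking the logarithm of the product, each elementary factor should rewrite as an integral of the form $\int ((s - x)^{-1} - s/(1 + s^2)) \psi(s) ds$ with $\psi$ equal to an indicator of an interval: $\log(1 + \alpha x)$ produces $\ind_{(-1/\alpha, 0)}$, $-\log(1 - \beta x)$ produces $\ind_{(-\infty, -1/\beta)}$, and the Stieltjes-type logarithm transports $\mu$ to $(1, \infty)$ via $s = 1/t$, yielding the distribution function of the image measure. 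Summing these pieces gives $\ph$ and verifies its listed regularity. The converse passage $\ref{thm:main:c} \Rightarrow \ref{thm:main:b}$ splits $\ph = \ph \ind_{(-\infty, 0)} + \ph \ind_{(1, \infty)}$ and runs the same identifications backward, reading the decreasing integer-valued part as Schoenberg--Edrei data and the increasing-after-rounding part as a Hausdorff measure.

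For $\ref{thm:main:b} \Rightarrow \ref{thm:main:a}$, which together with the previous step delivers the ``furthermore'' assertion, I would combine two ingredients. First, convolution with a Pólya frequency sequence is variation-diminishing in the sense of Schoenberg, so it preserves (and does not increase) the number of sign changes of iterated differences of any input. Second, a completely monotone sequence $c_k$ converging to zero and extended by zero to negative indices is itself bell-shaped: the identity $\Delta^n c_k = (-1)^n \int_{(0, 1)} t^k (1 - t)^n \mu(dt)$ for $k \ge 0$, combined with a direct sign count on the transition region $-n \le k < 0$, should yield exactly $n$ sign changes of $\Delta^n$. The convolution result then follows from variation diminishment.

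The main obstacle is the remaining implication $\ref{thm:main:a} \Rightarrow \ref{thm:main:c}$, which is the discrete counterpart of the hardest portion of~\cite{ks}. The plan is to study the generating function $F(x) = \sum_k a_k x^k$ on the unit disk, extend it through the boundary as far as possible, and identify $\ph$ as $\pi^{-1}$ times the boundary imaginary part of $\log F$ after transporting the disk onto the upper half-plane by a Möbius map. Bell-shapedness should translate, through a careful sign-change bookkeeping argument in the spirit of the Laplace-transform analysis of~\cite{ks}, into a Pick/Nevanlinna representation of $\log F$ with the specific support and monotonicity properties listed in~\ref{thm:main:c}. The genuinely new discrete phenomena are that $\ph$ is integer-valued on $(-\infty, 0)$ and increasing-after-rounding on $(1, \infty)$; I expect these to emerge from the observation that sign changes of $\Delta^n a_k$ can only occur at integer positions, forcing jumps of $\arg F$ on the boundary to be quantised. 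Managing this discrete quantisation, and verifying that the nonintegrability of $(1 - \ph(s))/(1 - s)$ near $s = 1^+$ corresponds precisely to $a_k \to 0$, will be the most technically demanding step.
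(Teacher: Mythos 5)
Your overall architecture matches the paper's (the equivalence via a cycle with \ref{thm:main:a}$\Rightarrow$\ref{thm:main:c} as the hard leg, \ref{thm:main:c}$\Rightarrow$\ref{thm:main:b} by splitting $\ph = \ph_1 + \ph_2$ into a step part and a $[0,1]$-valued part, and \ref{thm:main:b}$\Rightarrow$\ref{thm:main:a} via variation diminishing), and your outlines of the last two are essentially right. But the crucial implication \ref{thm:main:a}$\Rightarrow$\ref{thm:main:c} is left as a wish: you invoke ``sign-change bookkeeping in the spirit of \cite{ks}'' and boundary values of $\Arg \log F$ without specifying any mechanism. The paper's proof of this direction (Theorem~\ref{thm:repr}) is the real content and has several moving parts that you do not anticipate: it starts from the discrete Post inversion formula $F(x) = \lim_n (n+j_n+1)\binom{n+j_n}{n}(-1)^n\Delta^n A(j_n)$ for the moment sequence $A(k)$ of $F$; it inserts a carefully chosen polynomial $P_n(j) = \prod_{m=0}^{n-1}(\alpha_{n,m}-j)$ built from the sign-change locations of $\Delta^n a(k-n)$ to turn the inner sum into a genuine Stieltjes function $H_n$; it then reads off the exponential representation of $H_n$, assembles $\ph_n$, and has to run a delicate case analysis (Step~7) to establish that $\ph_n$ is stepwise decreasing on $(-n,0)$ and increasing-after-rounding on $(-\infty,-n)$; and it closes with a Pick-function convergence argument that uses Lemma~\ref{lem:vague} to pass the stepwise/after-rounding structure to the limit. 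The paper's introductory remark even flags exactly the obstruction your plan glosses over: unlike in the continuous case, the natural factorisation does not split automatically into a Pólya-frequency factor and a completely monotone factor, so ``additional arguments are needed,'' and these are precisely what your proposal is missing.

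There is also a small but genuine error in your sketch of \ref{thm:main:b}$\Rightarrow$\ref{thm:main:c}: you claim the Hausdorff measure $\mu$ of the completely monotone factor transports via $s = 1/t$ to $(1,\infty)$ and that $\ph$ there is ``the distribution function of the image measure.'' This is false in general. The correct $\ph_2$ on $(1,\infty)$ is $\pi^{-1}$ times the boundary argument of the generating function $\int_{[0,1]}(1-tx)^{-1}\mu(dt)$, which is a $[0,1]$-valued function but not the cumulative distribution function of the pushed-forward measure; for example, with $\mu = \tfrac12(\delta_{1/2}+\delta_{1/3})$, $\ph_2$ takes only the values $0$ and $1$, whereas the distribution function of the image measure takes the value $\tfrac12$ on $(2,3)$. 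The two descriptions happen to agree for a single atom, which is probably why the error is easy to make.
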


A sample function $\ph$ satisfying the conditions listed in item~\ref{thm:main:c} is shown in Figure~\ref{fig:bs}.

\begin{figure}
\includegraphics[width=0.8\textwidth]{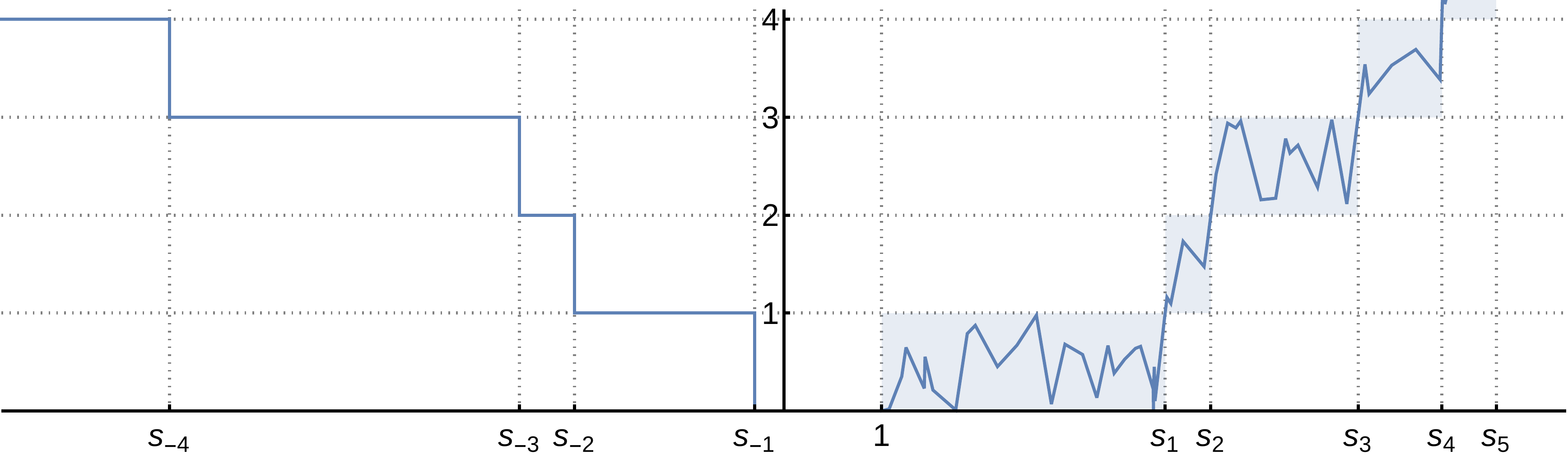}
\caption{A sample function $\ph$ in Theorem~\ref{thm:main}\ref{thm:main:c}. For an appropriate doubly infinite increasing sequence of \emph{points of increase} $s_k \in [-\infty, \infty]$ such that $s_{-1} < 0 < 1 = s_0$, we have $\ph(s) = k$ for $s \in (s_{-k - 1}, s_{-k})$, and $k \le \ph(s) \le k + 1$ for $s \in (s_k, s_{k + 1})$, where $k = 0, 1, 2, \ldots$}
\label{fig:bs}
\end{figure}

\begin{remark}
The above result looks very similar to Theorems~1.1 and~1.3 of~\cite{ks} (see also Theorem~1.1 in~\cite{kwasnicki}), which provide a similar description for bell-shaped functions. For a one-sided integrable function $f$, these results assert that the following three conditions are equivalent: (a)~$f$ is bell-shaped; (b) $f$ is the convolution of a Pólya frequency function and a completely monotone function; (c)~the Laplace transform of $f$ is given by an integral formula similar to~\eqref{eq:main}.

The proof of our Theorem~\ref{thm:main} also follows the approach of~\cite{js,kwasnicki,ks}. Nevertheless, there are essential differences. The most important obstacle that needed to be overcome is related to the factorisation~\eqref{eq:bs:proof:3} of the expression appearing in the discrete variant of Post's inversion formula. In the case of bell-shaped functions, an analogous expression was automatically a product of two Laplace transforms: one of a Pólya frequency function and another one of a completely monotone function (or, for two-sided functions, an AM-CM function). In our case, the first factor in the right-hand side of~\eqref{eq:bs:proof:3} is indeed the generating function of a Pólya frequency sequence. The other one, however, is not the generating function of a completely monotone sequence, not even after passing to the limit as $n \to \infty$. In fact, the two factors need to be treated simultaneously, and additional arguments are needed in order to prove the desired factorisation of the limit.
\end{remark}

\begin{remark}
Our motivation to study bell-shaped sequences, a subclass of unimodal sequences, comes from probability theory, where geometric properties of probability mass functions of discrete random variables plays a certain role; we refer to~\cite{dj} for an account of unimodality, and to~\cite{bdk,dhkr,jw,ll,rao} for a sample of applications in statistics. Thus, we remark that a variant of Theorem~\ref{thm:main} for \emph{summable} bell-shaped sequences holds true, provided that in item~\ref{thm:main:b} the completely monotone sequence is required to be summable rather than converge to zero, and in item~\ref{thm:main:c} the last condition of $\ph$ is replaced by the following stronger one: $\ph(s) / (s - 1)$ is integrable in a right neighbourhood of $1$.

Of course, unimodality, convexity and related properties of sequences find numerous applications outside probability. We refer to the survey~\cite{stanley} for a rather outdated, but still excellent discussion in the context of algebra, combinatorics and geometry.
\end{remark}

The remaining part of the article consists of five sections. Basic definition and auxiliary results are gathered in Section~\ref{sec:pre}. Section~\ref{sec:repr} contains the core of the proof of Theorem~\ref{thm:main}, namely, the proof that condition~\ref{thm:main:a} implies condition~\ref{thm:main:c}. The proof of the other assertions of Theorem~\ref{thm:main} is given in Section~\ref{sec:exp}. In Section~\ref{sec:whale} we discuss briefly a closely related concept of whale-shaped sequences. Finally, we provide a number of examples in Section~\ref{sec:examples}.

%
%                            ---------- o ----------
%

\section{Preliminaries}
\label{sec:pre}

Below we gather definitions and known results, as well as a few auxiliary lemmas, required for the proof of Theorem~\ref{thm:main}.

\subsection{Sequences}

Throughout the article, sequences (or one-sided sequences) are indexed with nonnegative integers $\N = \{0, 1, 2, \ldots\}$, and doubly infinite sequences (or two-sided sequences) are indexed with integers $\Z = \{\ldots, -1, 0, 1, 2, \ldots\}$. To improve clarity, we often use the function notation $a(k)$ for the $k$th term of a sequence instead of the more customary subscript notation $a_k$. We identify every sequence $(a(k) : k \in \N)$ with the corresponding doubly infinite sequence $(\bar{a}(k) : k \in \Z)$, defined by
\formula{
 \bar{a}(k) & = \begin{cases}
  a(k) & \text{for $k \ge 0$,} \\
  0 & \text{for $k < 0$.}
 \end{cases}
}
Whenever this causes no confusion, we do not distinguish between these two sequences, and we use the same symbol $a(k)$ to denote both of them. Additionally, whenever the meaning is clear from the context, we use the symbol $a(k)$ to denote both the $k$th term of a sequence and the entire sequence, and we tend avoid the more formal, but less convenient notation $(a(k) : k \in \N)$ or $(a(k))$.

The \emph{difference operator} $\Delta$ is defined by the formula
\formula{
 \Delta a(k) & = a(k + 1) - a(k) .
}
For $n = 0, 1, 2, \ldots$ we define the \emph{iterated difference operator} $\Delta^n$ inductively: we let $\Delta^0 a(k) = a(k)$ and $\Delta^{n + 1} a(k) = \Delta^n \Delta a(k)$ for $n = 0, 1, 2, \ldots$

The \emph{convolution} of sequences is defined in the usual way:
\formula{
 a * b(k) & = \sum_{j = -\infty}^\infty a(j) b(k - j)
}
whenever the series in the right-hand side converges. Note that the series reduces to a finite sum if both $a(k)$ and $b(k)$ are one-sided sequences.

We have the following summation by parts formula:
\formula{
 \sum_{k = -\infty}^\infty a(k) \Delta b(k) & = -\sum_{k = -\infty}^\infty b(k) \Delta a(k - 1)
}
whenever either of sums converges and the boundary terms go to zero:
\formula{
 \lim_{k \to \pm\infty} a(k) b(k) & = 0 .
}
An $n$-fold application of summation by parts leads to a more general formula
\formula{
 \sum_{k = -\infty}^\infty a(k) \Delta^n b(k) & = (-1)^n \sum_{k = -\infty}^\infty b(k) \Delta^n a(k - n) ,
}
again provided that all boundary terms go to zero:
\formula{
 \lim_{k \to \pm\infty} \Delta^j a(k - j) \Delta^{n - 1 - j} b(k) & = 0
}
for $j = 0, 1, 2, \ldots, n - 1$.

A sequence $a(k)$ is said to \emph{change sign} $N$ times, where $N = 0, 1, 2, \ldots$\,, if there exist indices $k_0 < k_1 < \ldots < k_N$ such that
\formula{
 a(k_{j - 1}) a(k_j) & < 0 & \text{for $j = 1, 2, \ldots, N$,}
}
and $N$ is the largest number with the above property. If such indices exist for every $N$, we say that $a(k)$ changes sign infinitely many times.

A nonnegative sequence $a(k)$ is said to be:
\begin{enumerate}[label=(\alph*)]
\item a \emph{completely monotone sequence} if $(-1)^n \Delta^n a(k) \ge 0$ for $n, k = 0, 1, 2, \ldots$\,;
\item a \emph{Pólya frequency sequence} if the infinite matrix $(a(k - l) : k, l \in \N)$ is \emph{totally positive};
\item a \emph{bell-shaped sequence} if $a(k)$ converges to zero as $k \to \infty$, and for every $n = 0, 1, 2, \ldots$ the doubly infinite sequence $\Delta^n a(k)$ changes sign $n$ times.
\end{enumerate}
To simplify the discussion, we exclude the sequence which is constant zero. All three notions are discussed in more detail later in this section, and the last one is the main subject of the present article.

\subsection{Generating functions and moment sequences}

The \emph{generating function} of a sequence $a(k)$ is given by
\formula{
 F(x) & = \sum_{k = 0}^\infty a(k) x^k
}
whenever the series converges. If $a(k)$ is summable, then the generating function is a holomorphic function in the unit disk in the complex plane, and it extends continuously to the unit circle. When $a(k)$ is merely bounded, then $F$ is still a holomorphic function in the unit disk, but it may fail to extend continuously to the boundary.

The dual notion is the \emph{moment sequence}: if $\mu$ is a finite signed measure on $\R$, then the moment sequence $a(k)$ of $\mu$ is defined as
\formula{
 a(k) & = \int_{\R} s^k \mu(ds)
}
whenever the integral is well-defined. This is the case when, for example, $\mu$ is concentrated on a bounded interval.

The following discrete variant of the classical Post's inversion formula for the Laplace transform plays a key role in our proof. For completeness, we provide a short probabilistic proof.

\begin{theorem}[discrete Post's inversion formula; Theorem III.3 in~\cite{widder}]
\label{thm:post}
Suppose that $F$ is an integrable function on $(0, 1)$. Let
\formula{
 A(k) & = \int_{(0, 1)} x^k F(x) dx ,
}
where $k = 0, 1, 2, \ldots$\,, be the moment sequence of $F$. Suppose furthermore that $F$ is continuous at $x \in (0, 1)$, and
\formula{
 \lim_{n \to \infty} \frac{j_n}{n} & = \frac{x}{1 - x} \, .
}
Then
\formula{
 F(x) & = \lim_{n \to \infty} (n + j_n + 1) \binom{n + j_n}{n} (-1)^n \Delta^n A(j_n) .
}
\end{theorem}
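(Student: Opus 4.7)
The plan is to recognise the scaled iterated differences on the right-hand side as the expectation of $F$ against a Beta distribution that concentrates at $x$ as $n \to \infty$, and then to conclude by continuity of $F$ at $x$.

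First I would expand $\Delta^n$ via the binomial theorem: since $\Delta = E - \id$ where $E$ denotes the unit shift, one has $\Delta^n A(j) = \sum_{i = 0}^n \binom{n}{i} (-1)^{n - i} A(j + i)$. Substituting $A(k) = \int_0^1 x^k F(x) dx$ and swapping sum and integral then gives
\[
 (-1)^n \Delta^n A(j) = \int_0^1 x^j (1 - x)^n F(x) dx.
\]
The prefactor $(n + j + 1) \binom{n + j}{n} = (n + j + 1)! / (j! \, n!)$ is precisely the reciprocal of the Beta function $B(j + 1, n + 1)$, so
\[
 (n + j + 1) \binom{n + j}{n} (-1)^n \Delta^n A(j) = \int_0^1 F(y) p_{j, n}(y) dy = \ex[F(Y_{j, n})],
\]
where $p_{j, n}$ denotes the density of the Beta$(j + 1, n + 1)$ distribution and $Y_{j, n}$ is a random variable with that distribution. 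This is the probabilistic representation that does all the work.

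Next I would check that $Y_{j_n, n} \to x$ in probability as $n \to \infty$. A direct calculation gives $\ex[Y_{j_n, n}] = (j_n + 1) / (j_n + n + 2)$, which converges to $x$ under the hypothesis $j_n / n \to x / (1 - x)$, while $\operatorname{Var}(Y_{j_n, n})$ is easily seen to be of order $1 / n$. Chebyshev's inequality then yields the claimed convergence in probability.

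The remaining—and main—difficulty is to upgrade this to $\ex[F(Y_{j_n, n})] \to F(x)$ when $F$ is merely integrable rather than bounded. I would fix $\eps > 0$ and split the integral over $(0, 1)$ according to whether $|y - x| < \eps$ or not. On the near set, continuity of $F$ at $x$ combined with $\int_0^1 p_{j_n, n}(y) dy = 1$ makes the contribution arbitrarily close to $F(x)$ once $\eps$ is small. On the far set, I would exploit log-concavity of $p_{j_n, n}$ together with the fact that its mode lies near $x$: a routine estimate (via Stirling's formula, or by bounding the concave function $y \mapsto j_n \log y + n \log(1 - y)$ by a parabola near its maximum) shows that $\sup_{|y - x| \ge \eps} p_{j_n, n}(y)$ decays exponentially in $n$, so that, together with $F \in L^1(0, 1)$, the contribution from this region vanishes. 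Combining the two estimates yields the desired limit. The technical heart of the proof is therefore this last uniform exponential decay away from the mode, which is what makes the argument work without any boundedness hypothesis on $F$.
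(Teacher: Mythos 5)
Your proposal is correct and takes essentially the same approach as the paper: both recognise the scaled iterated differences as $\ex[F(Y_{j_n,n})]$ for a Beta$(j_n+1, n+1)$ variable concentrating at $x$, and both bound the contribution from $\{|y-x| \ge \eps\}$ using a uniform estimate on the density away from its mode. The paper realises the Beta variable as an order statistic of i.i.d.\ uniforms and invokes dominated convergence (bounded case) or Vitali's theorem (integrable case), while you argue via Chebyshev and a direct near/far split, but the underlying mechanism is identical.
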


\begin{proof}
We first consider the case when $F$ is bounded on $(0, 1)$. Suppose that $X(1), X(2), \ldots$ is a sequence of i.i.d.\@ random variables uniformly distributed over $(0, 1)$. Let $X(j : k)$ denote the corresponding order statistic, i.e.\@ the $j$th smallest value among $X(1), X(2), \ldots, X(k)$. Suppose that
\formula{
 \lim_{n \to \infty} k_n & = \infty && \text{and} & \lim_{n \to \infty} \frac{j_n}{k_n} & = x .
}
We have
\formula{
 \lim_{n \to \infty} X(j_n : k_n) = x
}
with probability one. (This relatively simple folklore fact follows easily from stronger results: asymptotic normality of order statistics, see~\cite{bahadur}, or the Glivenko--Cantelli theorem.) In particular, by Lebesgue's dominated convergence theorem,
\formula{
 \lim_{n \to \infty} \ex F(X(j_n : k_n)) & = F(x) .
}
On the other hand,
\formula{
 \pr(X(j : k) \in dx) & = (k + 1) \binom{k}{j} x^j (1 - x)^{k - j} dx ,
}
and hence, if $0 < j < k$,
\formula{
 \ex F(X(j : k)) & = \int_0^1 (k + 1) \binom{k}{j} x^j (1 - x)^{k - j} F(x) dx \\
 & = (k + 1) \binom{k}{j} \sum_{i = 0}^{k - j} \binom{k - j}{i} (-1)^i A(j + i) \\
 & = (k + 1) \binom{k}{j} (-1)^{k - j} \Delta^{k - j} A(j) .
}
Choosing $k_n = n + j_n$ and observing that
\formula{
 & \text{if } \lim_{n \to \infty} \frac{j_n}{n} = \frac{x}{1 - x} \, , \text{ then } \lim_{n \to \infty} \frac{j_n}{k_n} = x ,
}
we obtain the desired result.

For a general integrable function $F$, the argument is very similar, but we use Vitali's convergence theorem instead of Lebesgue's dominated convergence theorem. Since we will not need this result here, we only sketch the proof. For a sufficiently small $\delta > 0$, the function $F$ is bounded on $(x - \delta, x + \delta)$. On the other hand, the density functions of $X(j_n : k_n)$ are easily shown to be bounded on $(0, 1) \setminus (x - \delta, x + \delta)$ uniformly with respect to $n = 1, 2, \ldots$\, This implies that the family of random variables $F(X(j_n : k_n))$ is uniformly integrable, and so Vitali's convergence theorem indeed applies to the limit of $\ex F(X(j_n : k_n))$.

\ignore{It remains to show that if $j_n / n$ converges to $\frac{x}{1 - x}$ and $k_n = n + j_n$, then
\formula{
 f_n(s) & = (k_n + 1) \binom{k_n}{j_n} s^j (1 - s)^{k - j_n}
}
is bounded uniformly in $n = 1, 2, \ldots$ and $s \in (0, 1) \setminus (x - \delta, x + \delta)$. This is standard, but for completeness we provide a short argument. Recall that if $x_n = j_n / k_n$, then $x_n$ converges to $x$. The unimodal function $f_n$ (defined on $(0, 1)$) has a maximum at $x_n$, and so for $n$ sufficiently large, $f_n$ is increasing over $(0, x - \frac{\delta}{2}]$ and decreasing over $[x + \frac{\delta}{2}, 1)$. It follows that for $n$ sufficiently large and $s \in (0, x - \delta]$, we have
\formula{
 f_n(s) \le f_n(x - \delta) & \le \frac{2}{\delta} \int_{x - \delta}^{x - \delta/2} f_n(t) dt \\
 & \le \frac{2}{\delta} \int_{(0, 1) \setminus (x - \delta/2, x + \delta/2)} f_n(t) dt \\
 & \le \frac{2}{\delta} \, \pr(|X(j_n : k_n) - x| \ge \tfrac{\delta}{2}) .
}
A similar argument shows that the same estimate holds for $s \in [x + \delta, 1)$. Finally, since $X(j_n : k_n)$ converges to $x$ with probability one, the right-hand side of the above estimate clearly converges to zero as $n \to \infty$, and the desired uniform bound follows.}
\end{proof}

\subsection{Completely monotone sequences}

Recall that a sequence $a(k)$ is \emph{completely monotone} if $(-1)^n \Delta^n a(k) \ge 0$ for $n, k = 0, 1, 2, \ldots$ We assume here that $a(k)$ is not constant zero.

Every geometric sequence $s^k$ with $s \in [0, 1]$ is clearly completely monotone: if $a(k) = s^k$, then $\Delta^n a(k) = (-1)^n (1 - s)^n a(k)$ (here and below we assume that $0^0 = 1$). It follows that for every finite measure $\mu$ on $[0, 1]$ the moment sequence
\formula[eq:hausdorff]{
 a(k) & = \int_{[0, 1]} s^k \mu(ds)
}
is completely monotone. By the famous theorem due to Hausdorff, the converse is true: every completely monotone sequence is the moment sequence of a finite measure on $[0, 1]$, that is, it is given by~\eqref{eq:hausdorff}. We refer to Section~III.4 in~\cite{widder} for further properties of completely monotone sequences.

By Fubini's theorem, the generating function of a completely monotone sequence $a(k)$ given by~\eqref{eq:hausdorff} satisfies
\formula[eq:cm:gen]{
 F(x) & = \sum_{k = 0}^\infty a(k) x^k = \int_{[0, 1]} \sum_{k = 0}^\infty s^k x^k \mu(ds) = \int_{[0, 1]} \frac{1}{1 - s x} \, \mu(ds)
}
when $|x| < 1$. In particular, $F$ extends to a holomorphic function in $\C \setminus [1, \infty)$, and it is a \emph{Pick function} on $(-\infty, 1)$, a notion discussed later in this section; see Section~\ref{sec:cm:pick} for details.

As a direct consequence of Fubini's theorem, a completely monotone sequence $a(k)$ given by~\eqref{eq:hausdorff} converges to zero if and only if $\mu(\{1\}) = 0$, and $a(k)$ is summable if and only if additionally
\formula{
 \int_{[0, 1)} \frac{1}{1 - s} \, \mu(ds) < \infty .
}
In terms of the generating function $F$ given by~\eqref{eq:cm:gen}, $a(k)$ converges to zero if and only if
\formula[eq:cm:zero]{
 \lim_{x \to 1^-} (1 - x) F(x) & = 0 .
}
Indeed: the limit in the left-hand side is equal to $\mu(\{1\})$ by~\eqref{eq:cm:gen} and the dominated convergence theorem. Similarly, $a(k)$ is summable if and only if $F(1)$ is well-defined, or, equivalently, $F$ is bounded on $(0, 1)$.

\subsection{Pólya frequency sequences}
\label{sec:pf}

By definition, a doubly infinite sequence $a(k)$ is a \emph{Pólya frequency sequence} if the infinite matrix $(a(k - l) : k, l \in \N)$ is \emph{totally positive}, that is, if all finite matrices $(a(k_i - l_j) : i, j \in \{1, 2, \ldots, n\})$, where $k_1 < k_2 < \ldots < k_n$ and $l_1 < l_2 < \ldots < l_n$, have nonnegative determinants. Again we assume here that $a(k)$ is not constant zero.

The theory of Pólya frequency sequences is in large part due to Schoenberg. In particular, Edrei proved the conjecture of Schoenberg, which asserts that if $a(k)$ is a (one-sided) Pólya frequency sequence, then the generating function $F$ of $a(k)$ is given by
\formula[eq:pf:gen]{
 F(x) & = \sum_{k = 0}^\infty a(k) x^k = e^{b x + c} \prod_{m = 0}^\infty \frac{1 + q_m x}{1 - p_m x} \, ,
}
where $b \in [0, \infty)$, $c \in \R$, and $p_m$ and $q_m$ are summable nonnegative sequences. Conversely, the right-hand side of the above formula always defines the generating function of a Pólya frequency sequence. This result is given as Theorem~11.5.3 in Karlin's monograph~\cite{karlin} on total positivity.

A Pólya frequency sequence $a(k)$ satisfying~\eqref{eq:pf:gen} is bounded if and only if $p_m \le 1$ for every $m$, and it is summable if and only if $p_m < 1$ for every $m$.

For a summable Pólya frequency sequence $a(k)$, the generating function $F$ again extends to a holomorphic function in $\C \setminus [1, \infty)$, and its complex logarithm (which is continuous and takes a real value at $0$) turns out to be a \emph{Pick function} on $(0, 1)$, as shown later in this section; see Section~\ref{sec:pf:pick} for details. We remark that for a general Pólya frequency sequence $a(k)$, the complex logarithm of the generating function $F$ is a Pick function on $(-(\max_{m \in \N} q_m)^{-1}, (\max_{m \in \N} p_m)^{-1})$, but we will not need this result.

Every summable Pólya frequency sequence $a(k)$ has the \emph{variation diminishing property}: the convolution with $a(k)$ does not increase the number of sign changes; see Theorem~5.1.5 in~\cite{karlin}. Although this will not be needed in this article, we mention that the converse is also true: every summable nonnegative sequence with the variation diminishing property is a Pólya frequency sequence. The last property follows from the results of Chapter~5 in~\cite{karlin} in a similar way as Theorem~5.4.2 therein is proved. See also Chapter~IV in~\cite{hw} for a related discussion.

We remark that if a random variable $X$ is a sum of independent: Poissonian random variable with parameter $b$; series of geometric random variables with parameters $p_m$; and series of Bernoulli random variables with parameters $q_m / (q_m + 1)$, then the sequence $a(k) = \pr(X = k)$ is a Pólya frequency sequence satisfying~\eqref{eq:pf:gen} with an appropriate normalisation constant $c$.

\subsection{Pick functions}

A \emph{Pick function} is a holomorphic map from the (open) upper complex half-plane to its closure. Other names are commonly used for this notion, including: \emph{Nevanlinna function} (not to be confused with the \emph{Nevanlinna class} of functions), \emph{Nevanlinna--Pick function}, \emph{Herglotz function}. By a classical result due to Herglotz, every Pick function $F$ admits the \emph{Stieltjes representation}
\formula[eq:pick]{
 F(x) & = b x + c + \int_{\R} \biggl( \frac{1}{s - x} - \frac{s}{1 + s^2} \biggr) \mu(ds) ,
}
where $b \in [0, \infty)$, $c \in \R$ and $\mu$ is a measure on $\R$ such that $\int_{\R} (1 + s^2)^{-1} \mu(ds) < \infty$. Conversely, the right-hand side of~\eqref{eq:pick} defines a Pick function for every admissible $b$, $c$ and $\mu$. This result is given as Theorem II.I in Donoghue's monograph~\cite{donoghue} on Loewner's theorem.

The measure $\mu$ in~\eqref{eq:pick} can be recovered as the boundary limit of the imaginary part of $F$:
\formula{
 \mu(ds) & = \lim_{t \to 0^+} \frac{1}{\pi} \, \Im F(s + i t) ds ,
}
in the sense of the vague limit of measures; see Lemma~II.1 in~\cite{donoghue}, or formula~(3.10) in Remling's book~\cite{remling} on canonical systems.

Formula~\eqref{eq:pick} can be rephrased in the following convenient way:
\formula[eq:pick:var]{
 F(x) & = c + \int_{\R \cup \{\infty\}} \frac{1 + s x}{s - x} \, \tilde{\mu}(ds) ,
}
where $c \in \R$ is the same constant as in~\eqref{eq:pick}, and $\tilde{\mu}$ is a finite measure on $\R \cup \{\infty\}$, the one-point compactification of $\R$, given by
\formula{
 \tilde{\mu}(ds) & = \frac{1}{1 + s^2} \, \mu(ds) + b \delta_\infty(ds) .
}
Here we understand that for $s = \infty$ the integrand is equal to $(1 + s x) / (s - x) = x$. We refer to formula~(3.9) in~\cite{remling} for further details on the above reformulation.

Suppose that $F$ is a Pick function. If $F$ is not constant zero, then, by the open mapping theorem for holomorphic functions, $F$ has no zeroes in the upper complex half-plane. Therefore, the complex logarithm $\log F$ of $F$ is well defined (in the sense of the principal branch), and since $\Im \log F = \Arg F$ is nonnegative, $\log F$ is again a Pick function. Furthermore, since $\Im \log F = \Arg F$ takes values in $[0, \pi]$, in the Stieltjes representation~\eqref{eq:pick} of $\log F$ we necessarily have $b = 0$ and $\mu$ absolutely continuous, with density function taking values in $[0, 1]$. This leads to the \emph{exponential representation} of Pick functions: every nonzero Pick function $F$ is given by
\formula[eq:pick:exp]{
 F(x) & = \exp \biggl(\gamma + \int_{-\infty}^\infty \biggl( \frac{1}{s - x} - \frac{s}{1 + s^2} \biggr) \ph(s) ds\biggr)
}
for some constant $\gamma \in \R$ and some Borel function $\ph$ on $\R$ with values in $[0, 1]$. Conversely, the right-hand side of~\eqref{eq:pick:exp} defines a nonzero Pick function for all admissible parameters $\gamma$ and $\ph$. Furthermore,
\formula{
 \ph(s) ds & = \lim_{t \to 0^+} \frac{1}{\pi} \, \Arg F(s + i t) ds
}
in the sense of vague convergence of measures, and, in fact, $\ph(s)$ is the pointwise limit of $\pi^{-1} \Arg F(s + i t)$ for almost every $s \in \R$. We refer to formula~(II.6) in~\cite{donoghue}, and to Section~7.2 in~\cite{remling}.

We say that $F$ is a \emph{Pick function on $(\alpha, \beta)$}, where $-\infty \le \alpha < \beta \le \infty$, if $F$ is a Pick function which extends continuously to the interval $(\alpha, \beta)$ on the real axis, and takes real values there. By Schwarz's reflection principle, in this case $F$ extends to a holomorphic function on $\C \setminus ((-\infty, \alpha] \cup [\beta, \infty))$, satisfying $F(\overline{x}) = \overline{F(x)}$.

In terms of the Stieltjes representation~\eqref{eq:pick}, a Pick function $F$ is a Pick function on $(\alpha, \beta)$ if and only if $\mu((\alpha, \beta)) = 0$; see Lemma~II.2 in~\cite{donoghue}. Thus, if $F$ is a Pick function on $(\alpha, \beta)$, then representations~\eqref{eq:pick} and~\eqref{eq:pick:var} are valid for all $x \in \C \setminus ((-\infty, \alpha] \cup [\beta, \infty))$. In particular, Pick functions on $(\alpha, \beta)$ are increasing on $(\alpha, \beta)$.

Similarly, a Pick function $F$ is a Pick function on $(\alpha, \beta)$ such that $F > 0$ on $(\alpha, \beta)$ if and only if $\ph = 0$ almost everywhere on $(\alpha, \beta)$ in the exponential representation~\eqref{eq:pick:exp}.

\subsection{Convergence of Pick functions}

A sequence of Pick functions converges pointwise in the upper complex half-plane if and only if it converges uniformly on compact subsets of the upper complex half-plane, and the limit is necessarily again a Pick function. Furthermore, convergence of a sequence of Pick functions is equivalent to the convergence of the corresponding parameters $c$ (usual convergence of real numbers) and $\tilde{\mu}$ (weak convergence of measures on $\R \cup \{\infty\}$) in the modified Stieltjes representation~\eqref{eq:pick:var}. This is essentially Theorem~7.3(a) in~\cite{remling}, see also Lemma~II.3 in~\cite{donoghue}.

A sequence of Pick functions on $(\alpha, \beta)$ converges pointwise on $(\alpha, \beta)$ if and only if it converges in the sense described above, and the limit is necessarily again a Pick function on $(\alpha, \beta)$. This can be proved as Theorem~7.3(a) in~\cite{remling}, using a modified criterion for compactness of a set of Pick functions: Lemma~II.4 in~\cite{donoghue}. Another approach is to apply the much more general result given in Theorem~7.4(b) in~\cite{remling}.

We remark that Pick functions on $(0, \infty)$ which are nonnegative on $(0, \infty)$ form the class of \emph{complete Bernstein functions}, while the class of functions $F$ such that $-F$ is a Pick function on $(0, \infty)$ which is nonpositive on $(0, \infty)$ is the class of \emph{Stieltjes functions}. Pick functions on $(0, \infty)$ are called \emph{extended complete Bernstein functions} in~\cite{ssv}. We refer to that book for a detailed discussion of these classes of functions, and here we only mention that Stieltjes function have Stieltjes representation
\formula[eq:stieltjes]{
 F(x) & = c + \int_{[0, \infty)} \frac{1}{s + x} \, \mu(ds) ,
}
and that if $F$ is a nonzero Stieltjes function, then $1 / F$ is a Pick function on $(0, \infty)$ which is positive on $(0, \infty)$, and thus
\formula[eq:stieltjes:exp]{
 \frac{1}{F(x)} & = \exp \biggl(\gamma + \int_{-\infty}^0 \biggl( \frac{1}{s - x} - \frac{s}{1 + s^2} \biggr) \ph(s) ds\biggr) ,
}
where $\ph$ is a Borel function with values in $[0, 1]$.

\subsection{Generating functions of completely monotone sequences}
\label{sec:cm:pick}

We argue that, as already remarked above, the generating functions of completely monotone sequences are Pick functions.

Let $a(k)$ be a completely monotone function, and let $F$ be the generating function of $a(k)$, given by~\eqref{eq:cm:gen}. It is straightforward to see that the right-hand side of this formula defines a Pick function on $(-\infty, 1)$ which is nonnegative on $(-\infty, 1)$. It follows that the exponential representation of $F$ takes form
\formula[eq:cm:exp]{
 F(x) & = \exp \biggl(\gamma + \int_1^\infty \biggl( \frac{1}{s - x} - \frac{s}{1 + s^2} \biggr) \ph(s) ds\biggr) ,
}
where $\gamma \in \R$ and $\ph$ is a Borel function on $(1, \infty)$ taking values in $[0, 1]$; see Figure~\ref{fig:pff:cm}. Furthermore,
\formula{
 \ph(s) & = \lim_{t \to 0^+} \frac{1}{\pi} \, \Arg F(s + i t)
}
for almost every $s \in (1, \infty)$. Conversely, it is easy to see that every $\gamma \in \R$ and every Borel function $\ph$ on $(1, \infty)$ with values in $[0, 1]$ correspond in the way described above to a completely monotone sequence.

Observe that
\formula{
 \log \frac{1}{1 - x} & = \frac{\log 2}{2} + \int_1^\infty \biggl( \frac{1}{s - x} - \frac{s}{1 + s^2} \biggr) ds .
}
Thus, with the notation introduced above,
\formula{
 \log \frac{1}{(1 - x) F(x)} & = \frac{\log 2}{2} - \gamma + \int_1^\infty \biggl( \frac{1}{s - x} - \frac{s}{1 + s^2} \biggr) (1 - \ph(s)) ds .
}
By the above identity and~\eqref{eq:cm:zero}, a completely monotone sequence $a(k)$ converges to zero if and only if
\formula{
 \lim_{x \to 1^-} \int_1^\infty \biggl( \frac{1}{s - x} - \frac{s}{1 + s^2} \biggr) (1 - \ph(s)) ds & = \infty .
}
By the monotone convergence theorem, the above condition is equivalent to
\formula{
 \int_1^\infty \biggl( \frac{1}{s - 1} - \frac{s}{1 + s^2} \biggr) (1 - \ph(s)) ds & = \infty ,
}
that is, to nonintegrability of $(1 - \ph(s)) / (s - 1)$ in a right neighbourhood of $1$.

Finally, $a(k)$ is summable if and only if $F(x)$ is bounded on $(0, 1)$, or, equivalently,
\formula{
 \lim_{x \to 1^-} \int_1^\infty \biggl( \frac{1}{s - x} - \frac{s}{1 + s^2} \biggr) \ph(s) ds & < \infty .
}
By the monotone convergence theorem, the above condition is equivalent to 
\formula{
 \int_1^\infty \biggl( \frac{1}{s - 1} - \frac{s}{1 + s^2} \biggr) \ph(s) ds & < \infty ,
}
that is, to integrability of $\ph(s) / (s - 1)$ in a right neighbourhood of $1$.

\begin{figure}
\includegraphics[width=0.8\textwidth]{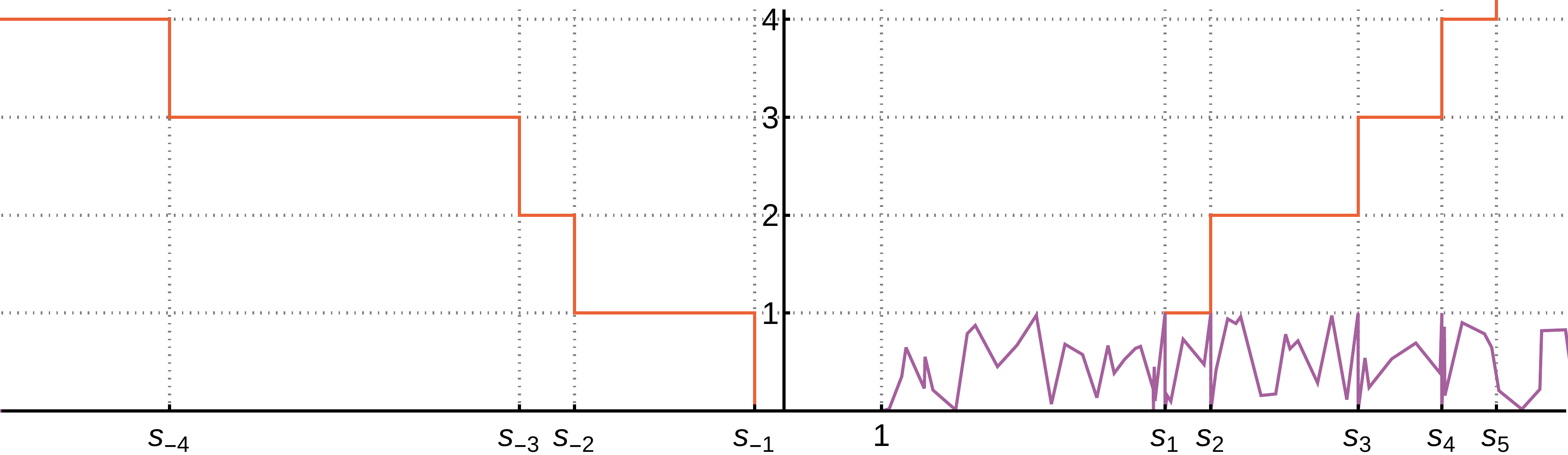}
\caption{A sample function $\ph$ in the representation~\eqref{eq:cm:exp} of the generating function of a completely monotone sequence (purple) and in the representation~\eqref{eq:pf:exp} of the generating function of a Pólya frequency sequence (red). The corresponding sequences are convolution factors of a bell-shaped sequence, which in turn corresponds to the function $\ph$ depicted in Figure~\ref{fig:bs}.}
\label{fig:pff:cm}
\end{figure}

\subsection{Generating functions of Pólya frequency sequences}
\label{sec:pf:pick}

We now show that the generating functions of Pólya frequency sequences are exponentials of Pick functions.

Consider a summable Pólya frequency sequence $a(k)$, and again let $F$ denote the generating function of $a(k)$. By~\eqref{eq:pf:gen}, we have
\formula[eq:pf:gen:log]{
 F(x) & = \exp \biggl( b x + c + \sum_{m = 0}^\infty \log(1 + q_m x) - \sum_{m = 0}^\infty \log(1 - p_m x) \biggr) ,
}
where $\log$ in the right-hand side denotes the principal branch of the complex logarithm. Recall that here $p_m$ and $q_m$ are summable nonnegative sequences, and $p_m < 1$ for every $m$. It is again straightforward to verify that the exponent in the right-hand side of~\eqref{eq:pf:gen:log} defines a Pick function on $(0, 1)$, and since we have
\formula{
 \log(1 + q x) & = \int_{-\infty}^{1/q} \biggl( \frac{1}{s - x} - \frac{1}{s - 1} \biggr) ds , \\
 \log \frac{1}{1 - p x} & = \int_{1/p}^\infty \biggl( \frac{1}{s - x} - \frac{1}{s + 1} \biggr) ds
}
(where we agree that $1/0 = \infty$ and $-1/0 = -\infty$), we find that the Stieltjes representation~\eqref{eq:pick} of $\log F$ reads
\formula*[eq:pf:exp]{
 F(x) & = \exp \biggl( b x + c + \int_{-\infty}^\infty \biggl( \frac{1}{s - x} - \frac{\sign s}{1 + |s|} \biggr) \ph(s) ds \biggr) \\
 & = \exp \biggl( b x + \tilde{c} + \int_{-\infty}^\infty \biggl( \frac{1}{s - x} - \frac{s}{1 + s^2} \biggr) \ph(s) ds \biggr) ,
}
where
\formula[eq:pf:exp:phi]{
 \ph(s) & = \sum_{m = 0}^\infty \ind_{(-\infty, -1/q_m)}(s) + \sum_{m = 0}^\infty \ind_{(1/p_m, \infty)}(s)
}
and
\formula{
 \tilde{c} & = c + \int_{-\infty}^\infty \biggl( \frac{s}{1 + s^2} - \frac{\sign s}{1 + |s|} \biggr) \ph(s) ds ;
}
see Figure~\ref{fig:pff:cm} Note that in the derivation of~\eqref{eq:pf:exp}, we applied Fubini's theorem to exchange the sum and the integral. Another way to justify this step is to observe that adding a series of Pick function is equivalent to adding the corresponding parameters $b$, $c$ and $\mu$ in their Stieltjes representations~\eqref{eq:pick}.

The converse to the above representation is true: every $b \in [0, \infty)$, $c \in \R$ and every Borel function $\ph$ as in~\eqref{eq:pf:exp:phi}, where $p_m$ and $q_m$ are summable nonnegative sequences and $p_m < 1$ for every $m = 0, 1, 2, \ldots$\,, correspond to a summable Pólya frequency sequence.

In the terminology introduced in the following section, $\ph$ given by~\eqref{eq:pf:exp:phi} is stepwise decreasing on $(-\infty, 0)$ and stepwise increasing on $(1, \infty)$. Summability of the sequences $p_m$ and $q_m$ is easily found to be equivalent to the integrability condition
\formula[eq:pf:int]{
 & \text{$\ph(s) / s^2$ is integrable near $-\infty$ and near $\infty$.}
}
on the corresponding function $\ph$ given by~\eqref{eq:pf:exp:phi}, and the inequality $p_m > 1$ for every $m = 0, 1, 2, \ldots$ simply means that $\ph = 0$ in a right neighbourhood of $1$.

Note that formula~\eqref{eq:pf:exp} can be treated as the exponential representation of the generating function $F$, similar to~\eqref{eq:pick:exp}. We stress, however, that typically $F$ fails to be a Pick function, and that the functions $\ph$ defined by~\eqref{eq:pf:exp:phi} are typically not bounded from above by $1$.

\subsection{Auxiliary classes of monotone functions}
\label{sec:step:rect}

We say that a real-valued function $\ph$ is \emph{stepwise increasing} on an interval $(\alpha, \beta)$, where $-\infty \le \alpha < \beta \le \infty$, if, on this interval, $\ph$ is increasing and takes only integer values. \emph{Stepwise decreasing} functions are defined in a similar way; this condition already appeared in Theorem~\ref{thm:main}\ref{thm:main:c}. If $\ph$ is a stepwise increasing function on $(\alpha, \beta)$, then we have $\ph(s) = k$ for $s \in (s_k, s_{k + 1})$, where the \emph{points of increase} $s_k \in [\alpha, \beta]$ form a doubly infinite increasing sequence, which converges to $\alpha$ as $k \to -\infty$, and which converges to $\beta$ as $k \to \infty$. Furthermore, the points of increase are determined uniquely by $\ph$.

We say that a Borel real-valued function $\ph$ is \emph{monotone-after-rounding} on $(\alpha, \beta)$ if for every integer $n$ the function $\ph - n$ changes sign at most once in $(\alpha, \beta)$. More precisely, we say that $\ph$ is \emph{increasing-after-rounding} if $\ph - n$ changes sign from negative to positive for some $n \in \Z$, and that $\ph$ is \emph{decreasing-after-rounding} otherwise; both notions include the case when $\ph - n$ has constant sign for every $n \in \Z$. This definition is easily seen to be equivalent to the one given just before the statement of Theorem~\ref{thm:main}. Indeed: for a function $\ph$ which is monotone-after-rounding on $(\alpha, \beta)$, we have $\ph(s) \in [k, k + 1]$ for $s \in (s_k, s_{k + 1})$, where the \emph{points of increase} $s_k \in [\alpha, \beta]$ again form an appropriately chosen doubly infinite sequence convergent to $\alpha$ as $k \to -\infty$ and to $\beta$ as $k \to \infty$. Note, however, that the points of increase need not be defined uniquely: $s_k$ can be any number between $s_k^- = \inf\{s \in (\alpha, \beta) : \ph(s) \ge k\}$ and $s_k^+ = \sup\{s \in (\alpha, \beta) : \ph(s) \le k\}$. Clearly, $s_k^- \le s_k^+$, but it can happen that $s_k^- < s_k^+$ (and in this case $\ph(s) = k$ for $s \in [s_k^-, s_k^+]$).

Observe that if $\ph_1$ is stepwise increasing and $\ph_2$ is a Borel function taking values in $[0, 1]$, then $\ph_1 + \ph_2$ is increasing-after-rounding on $(\alpha, \beta)$ (with the same points of increase $s_k$). Conversely, if $\ph$ is increasing-after-rounding, then we can define a stepwise increasing function $\ph_1$ (with the same numbers $s_k$) such that $\ph_2 = \ph - \ph_1$ only takes values in $[0, 1]$.

Note that with the above definition, formula~\eqref{eq:pf:exp:phi} can be equivalently phrased as follows: $\ph$ is stepwise decreasing over $(-\infty, 0)$, stepwise increasing over $(1, \infty)$, and equal to zero in an open interval containing $[0, 1]$.

Suppose that $\ph_n$ are Borel functions with values in $[0, 1]$ on an interval $(\alpha, \beta)$, and that the measures $\ph_n(s) ds$ converge vaguely to a measure $\mu$. Then $\mu$ has a density function on $(\alpha, \beta)$ which takes values in $[0, 1]$. Indeed: for every nonnegative continuous $f$ whose support is a compact subset of $(\alpha, \beta)$ we have
\formula{
 0 & \le \int_{-\infty}^\infty f(s) \ph_n(s) ds \le \int_{-\infty}^\infty f(s) ds ,
}
and hence
\formula{
 0 & \le \int_{-\infty}^\infty f(s) \mu(ds) \le \int_{-\infty}^\infty f(s) ds .
}
This implies that on $(\alpha, \beta)$ the measure $\mu$ is absolutely continuous with respect to the Lebesgue measure, with density function taking values in $[0, 1]$, as desired.

The following result asserts that, similarly, if $\ph_n$ are stepwise monotone or monotone-after-rounding on an interval $(\alpha, \beta)$ and the measures $\ph_n(s) ds$ converge vaguely to a measure $\mu$, then $\mu$ has a density function on $(\alpha, \beta)$ which is stepwise monotone or monotone-after-rounding, respectively. The proof of this fact is elementary, but somewhat lengthy.

\begin{lemma}
\label{lem:vague}
Suppose that $\ph_n$ are real-valued functions on $\R$ such that the sequence of signed measures $\ph_n(s) ds$ converges vaguely to a signed measure $\mu$.
\begin{enumerate}[label={\rm (\alph*)}]
\item\label{lem:vague:a} If $\ph_n$ are stepwise increasing on an interval $(\alpha, \beta)$ with points of increase $s_{n, k}$, then $\mu$ has a stepwise increasing density function on $(\alpha, \beta)$. More precisely, for every integer $k$ the sequence $s_{n, k}$ converges as $n \to \infty$ to some limit $s_k$, and $s_k$ is the sequence of points of increase of the density function of $\mu$ on $(\alpha, \beta)$.
\item\label{lem:vague:b} If $\ph_n$ are increasing-after-rounding on an interval $(\alpha, \beta)$, then $\mu$ has an increasing-after-rounding density function on $(\alpha, \beta)$. More precisely, if $s_{n, k}$ denote the points of increase of $\ph_n$ and if $s_k$ denote any partial limit of $s_{n, k}$ as $n \to \infty$, then $s_k$ is a sequence of points of increase of the density function of $\mu$ on $(\alpha, \beta)$.
\end{enumerate}
Similar results hold true for stepwise decreasing functions and functions which are decreasing-after-rounding.
\end{lemma}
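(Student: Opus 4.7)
The plan is to prove part~(a) directly and deduce part~(b) by decomposing $\ph_n$ into its stepwise increasing skeleton and an $[0, 1]$-valued remainder. Before either step, I would establish local boundedness: on every compact subinterval $[a, b] \subset (\alpha, \beta)$, the values $\ph_n(s)$ are bounded uniformly in $n$ and $s \in [a, b]$. Indeed, monotonicity of $\ph_n$ on $(\alpha, \beta)$ gives, for any $c \in (a, b)$,
\formula{
 \frac{1}{c - a} \int_a^c \ph_n(s)\, ds & \le \ph_n(c^-) \le \ph_n(c^+) \le \frac{1}{b - c} \int_c^b \ph_n(s)\, ds ,
}
and testing vague convergence against smooth approximations of $\ind_{[a, c]}$ and $\ind_{[c, b]}$ shows that both outer quantities are bounded sequences; a finite cover of $[a, b]$ by such triples yields the uniform bound.

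For part~(a), local boundedness implies that for every compact $[a, b] \subset (\alpha, \beta)$ there exists $M$ such that for all sufficiently large $n$ the stepwise increasing function $\ph_n$ takes values in $\{-M, \ldots, M\}$ on $[a, b]$; consequently $s_{n, -M} \le a$ and $s_{n, M + 1} \ge b$. Since every $s_{n, k}$ lies in $[\alpha, \beta]$, a diagonal argument produces a subsequence along which $s_{n, k}$ converges to some $s_k \in [\alpha, \beta]$ for every integer $k$, with $s_k \le s_{k + 1}$ and, by the preceding observation applied to arbitrary $[a, b] \subset (\alpha, \beta)$, $s_k \to \alpha$ as $k \to -\infty$ and $s_k \to \beta$ as $k \to \infty$. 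For any $f \in C_c((\alpha, \beta))$ with support in $[a, b]$,
\formula{
 \int_\R f(s) \ph_n(s)\, ds & = \sum_{k = -M}^{M} k \int_{s_{n, k}}^{s_{n, k + 1}} f(s)\, ds ,
}
a finite sum whose length is uniformly bounded in $n$; passing to the limit term by term identifies $\mu$ on $(\alpha, \beta)$ as the measure with density $\psi(s) = k$ on $(s_k, s_{k + 1})$. Since the points of increase of a stepwise increasing function are uniquely determined by the function, every subsequential limit of $s_{n, k}$ must equal $s_k$, and the entire sequence converges.

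For part~(b), I would write $\ph_n = \psi_n + \theta_n$ with $\psi_n(s) = k$ on $[s_{n, k}, s_{n, k + 1})$, so that $\psi_n$ is stepwise increasing on $(\alpha, \beta)$ with the same points of increase $s_{n, k}$ as $\ph_n$, and $\theta_n = \ph_n - \psi_n$ takes values in $[0, 1]$. Given any simultaneous partial limit $s_k$ of the $s_{n, k}$ along a subsequence $n_j$, the measures $\theta_{n_j}(s)\, ds$ have locally bounded total variation, so by the standard Helly-type selection a further sub-subsequence $n_{j_l}$ produces a vague limit $\theta_{n_{j_l}}(s)\, ds \to \eta(s)\, ds$ on $(\alpha, \beta)$ with $\eta \in [0, 1]$, as observed in the paragraph preceding the lemma. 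Then $\psi_{n_{j_l}}(s)\, ds \to \mu - \eta(s)\, ds$ vaguely on $(\alpha, \beta)$, and part~(a) identifies this limit as having stepwise increasing density $\psi$ with points of increase exactly $s_k$. Adding $\eta$ back, $\mu$ has density $\psi + \eta$ on $(\alpha, \beta)$, which is increasing-after-rounding with points of increase $s_k$, as required.

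The main technical obstacle is the coordinated subsequence extraction in part~(b): one must arrange the convergence of the skeleton $\psi_n$, the $[0, 1]$-valued remainder $\theta_n$, and the prescribed partial limit of $s_{n, k}$ to be compatible, which is achieved by performing the extractions in the correct order (first the partial limit, then the remainder). Once local boundedness is in place, the remaining steps are routine. The stepwise decreasing and decreasing-after-rounding variants follow immediately by replacing $\ph_n$ with $-\ph_n$ and relabelling points of increase.
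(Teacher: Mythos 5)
Your proposal is correct in its essential ideas but takes a genuinely different route from the paper's. The paper's proof is organised around one elementary observation: if $\psi_n$ changes sign from nonpositive to nonnegative at $s_n$ and $\psi_n(s)\,ds$ converges vaguely, then the limit measure is nonpositive to the left and nonnegative to the right of \emph{any} partial limit of $s_n$. They apply this to $\psi_n = \ph_n - k$ for part~(b), and for part~(a) to $\psi_n = \ph_n - \lambda$ with $\lambda$ ranging over $(k-1, k)$, noting that the sign-change location is independent of $\lambda$; accumulating the resulting density bounds $i\,ds \le \mu \le j\,ds$ on the intervals between partial limits then gives both results, proving~(b) first and then~(a), and with no compactness extraction at all. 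Your argument goes the other way: local boundedness first, then~(a) by a diagonal extraction and a term-by-term limit of the explicit sum formula, and then~(b) by decomposing $\ph_n = \psi_n + \theta_n$ into its stepwise skeleton and a $[0,1]$-valued remainder, applying~(a) to $\psi_n$ and Helly selection (via the discussion in the paragraph preceding the lemma) to $\theta_n$. Your decomposition device for~(b) is natural and makes the structure transparent; conversely, the paper's argument is shorter and yields the ``any partial limit'' refinement for each $k$ individually, rather than along a common subsequence as in your extraction. Both suffice for the intended application.

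One step needs repair. In your local boundedness argument you write that testing vague convergence against smooth approximations of $\ind_{[a,c]}$ and $\ind_{[c,b]}$ shows the averages $\frac{1}{c-a}\int_a^c \ph_n$ and $\frac{1}{b-c}\int_c^b \ph_n$ are bounded sequences. This does not follow directly: vague convergence controls $\int g\,\ph_n\,ds$ only for $g$ smooth with compact support, and passing from such $g$ to the sharp indicators requires a priori control on $\ph_n$ near the endpoints, which is circular. The repair is immediate and keeps the same spirit: for $g \ge 0$ smooth with compact support in $(\alpha, c)$ and $g \not\equiv 0$, monotonicity gives $\int g\,\ph_n \le \ph_n(c^-) \int g$, hence $\ph_n(c^-) \ge (\int g\,\ph_n)/(\int g)$, which is bounded from below; similarly $\ph_n(c^+) \le (\int h\,\ph_n)/(\int h)$ for $h$ supported in $(c, \beta)$, and combined with $\ph_n(c^-) \le \ph_n(c^+)$ this bounds both quantities. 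Applying this at points flanking $[a,b]$ gives the uniform bound on $[a,b]$. With this adjustment the rest of your argument goes through unchanged.
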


\begin{proof}
The proof uses the following property: if each of the functions $\psi_n$ is nonpositive in $(\alpha, s_n)$ and nonnegative in $(s_n, \beta)$, and the sequence of measures $\psi_n(s) ds$ converges vaguely to a measure $\mu$, then for any partial limit $\tilde{s}$ of $s_n$ the measure $\mu$ is nonpositive on $(\alpha, \tilde{s})$ and nonnegative on $(\tilde{s}, \beta)$. This follows directly from the definition of vague convergence: if $f$ is a nonnegative continuous function whose support is a compact subset of $(\alpha, \tilde{s})$, then
\formula{
 \int_{-\infty}^\infty f(s) \psi_n(s) ds & \le 0 \quad \text{infinitely often,}
}
and hence
\formula{
 \int_{-\infty}^\infty f(x) \mu(ds) & \le 0.
}
Thus, $\mu$ is a nonpositive measure on $(\alpha, \tilde{s})$. A similar argument shows that $\mu$ is a nonnegative measure on $(\tilde{s}, \beta)$.

We first prove part~\ref{lem:vague:b}. Suppose that $\ph_n$ are increasing-after-rounding on $(\alpha, \beta)$ and that $\ph_n(s) ds$ converge vaguely to $\mu$. By the above property applied to $\psi_n = \ph_n - k$, for every integer $k$ there is a number $\tilde{s}_k \in [\alpha, \beta]$ such that $\mu(ds) - k \, ds$ is nonpositive on $(\alpha, \tilde{s}_k)$ and nonnegative on $(\tilde{s}_k, \beta)$. Clearly, $\tilde{s}_k$ are increasing, and if $\tilde{s}_i < \tilde{s}_j$, then on $(\tilde{s}_i, \tilde{s}_j)$ we have
\formula{
 i \, ds & \le \mu(ds) \le j \, ds .
}
Thus, $\mu$ has a density function $\ph$ on $(\tilde{\alpha}, \tilde{\beta})$, where
\formula{
 \tilde{\alpha} & = \lim_{k \to -\infty} \tilde{s}_k , \\
 \tilde{\beta} & = \lim_{k \to \infty} \tilde{s}_k ,
}
and $k \le \ph(s) \le k + 1$ for $s \in (\tilde{s}_k, \tilde{s}_{k + 1})$. In other words, $\ph$ is increasing-after-rounding on $(\tilde{\alpha}, \tilde{\beta})$. Finally, suppose that $\tilde{\beta} < \beta$. Since $(\tilde{\beta}, \beta) \sub (\tilde{s}_k, \beta)$ for every integer $k$, we have
\formula{
 \mu(ds) & \ge k \, ds
}
on $(\tilde{\beta}, \beta)$ for every integer $k$, which is absurd. Thus, $\tilde{\beta} = \beta$, and a similar argument shows that $\tilde{\alpha} = \alpha$. The desired result follows.

The proof of part~\ref{lem:vague:a} is very similar. Suppose that $\ph_n$ are stepwise increasing on $(\alpha, \beta)$, with points of increase denoted by $s_{n, k}$, and assume that $\ph_n(s) ds$ converge vaguely to $\mu$. Fix an integer $k$, and choose $\lambda \in (k - 1, k)$. Observe that $\psi_n = \ph_n - \lambda$ changes sign only once, at $s_{n, k}$, and the location of the sign change does not depend on $\lambda$. By the property discussed in the first part of the proof, there exists $\tilde{s}_k$ such that $\mu(ds) \le \lambda \, ds$ on $(\alpha, \tilde{s}_k)$ and $\mu(ds) \ge \lambda \, ds$ on $(\tilde{s}_k, \beta)$ for every $\lambda \in (k - 1, k)$. Thus, we have $\mu(ds) \le (k - 1) ds$ on $(\alpha, \tilde{s}_k)$ and $\mu(ds) \ge k \, ds$ on $(\tilde{s}_k, \beta)$. This shows that $\mu$ has a stepwise increasing density function on an interval $(\tilde{\alpha}, \tilde{\beta})$ defined as in the proof of part~\ref{lem:vague:b}, and the same argument as before shows that in fact $\tilde{\alpha} = \alpha$ and $\tilde{\beta} = \beta$. In order to complete the proof, it remains to observe that the numbers $\tilde{s}_k$ are determined uniquely by $\mu$, and so all partial limits of $s_{n, k}$ as $k \to \infty$ are necessarily equal to $s_k$.
\end{proof}

\subsection{Basic properties of bell-shaped sequences}
\label{sec:bell}

Recall that a sequence $a(k)$ is said to be \emph{bell-shaped} if it is nonnegative, it converges to zero, and, extended to a doubly infinite sequence in such a way that $a(k) = 0$ for $k < 0$, it satisfies the following sign-change condition: for $n = 0, 1, 2, \ldots$ the sequence $\Delta^n a(k)$ changes sign exactly $n$ times. Note that a sequence which is constant zero is not bell-shaped.

Right from the definition it follows that if $a(k)$ is bell-shaped and $n = 0, 1, 2, \ldots$\,, then $(-1)^n \Delta^n a(k) \ge 0$ for $k$ large enough. Using this property with $n$ replaced by $n + 1$, we find that the sequence $(-1)^n \Delta^n a(k)$ is eventually decreasing (that is, decreasing for $k$ large enough).

\begin{lemma}
\label{lem:bell:bound}
If $a(k)$ is a bell-shaped sequence and $n = 0, 1, 2, \ldots$\,, then
\formula{
 \lim_{k \to \pm\infty} k^n \Delta^n a(k) & = 0 .
}
If additionally $a(k)$ is summable, then
\formula{
 \lim_{k \to \pm\infty} k^{n + 1} \Delta^n a(k) & = 0 .
}
\end{lemma}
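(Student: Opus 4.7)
The limit at $-\infty$ is immediate: since $a$ is one-sided, the formula $\Delta^n a(k) = \sum_{j=0}^n (-1)^{n-j} \binom{n}{j} a(k+j)$ shows that $\Delta^n a(k) = 0$ for every $k \le -n-1$. So the interesting case is $k \to +\infty$, which I would handle by induction on $n$ via an elementary ``gain one power'' estimate.

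The key ingredients, both noted in the paragraph preceding the lemma, are that $(-1)^n \Delta^n a(k) \ge 0$ for $k$ large and that $(-1)^n \Delta^n a(k)$ is eventually nonincreasing. A short telescoping remark first upgrades these facts: if $L_n = \lim_{k \to \infty} (-1)^n \Delta^n a(k)$ (which exists in $[0,\infty)$ by monotonicity), then from $\Delta^n a = \Delta \Delta^{n-1} a$ we get $(-1)^n L_n = (-1)^{n-1} L_{n-1} - (-1)^{n-1} L_{n-1} = 0$, so $L_n = 0$ for every $n$, starting from $L_0 = 0$.

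Gain-one-power estimate: if $k^m |\Delta^n a(k)| \to 0$ as $k \to \infty$, then $k^{m+1} |\Delta^{n+1} a(k)| \to 0$. Fix $k$ large enough that $(-1)^{n+1} \Delta^{n+1} a(j)$ is nonnegative and nonincreasing for $j \ge k$ and that $(-1)^n \Delta^n a(j) \ge 0$ for $j \ge k$. Then
\[
(k+1)\,|\Delta^{n+1} a(2k)| \le \sum_{j=k}^{2k} (-1)^{n+1} \Delta^{n+1} a(j) = (-1)^n\bigl(\Delta^n a(k) - \Delta^n a(2k+1)\bigr) \le |\Delta^n a(k)|,
\]
the last inequality using $(-1)^n \Delta^n a(2k+1) \ge 0$. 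Multiplying by $(2k)^{m+1}/(k+1) = O(k^m)$ and applying the hypothesis yields the claim along even indices; monotonicity $|\Delta^{n+1} a(2k+1)| \le |\Delta^{n+1} a(2k)|$ handles odd ones, since $(2k+1)^{m+1}/(2k)^{m+1} \to 1$.

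The first claim of the lemma now follows by induction on $n$ starting from the trivial base case $k^0 a(k) \to 0$. For summable bell-shaped $a$, the base case is strengthened to $k a(k) \to 0$: indeed, $a$ is eventually nonincreasing (because $\Delta a \le 0$ eventually) and nonnegative, so $(k/2)\,a(k) \le \sum_{j=\lceil k/2 \rceil}^{k} a(j) \to 0$ as the tail of a convergent series (odd indices again by monotonicity). Iterating the gain-one-power estimate from this stronger base then yields the second claim $k^{n+1} \Delta^n a(k) \to 0$. The main thing to be careful about is bookkeeping the eventual sign and monotonicity of the successive differences so that the gain estimate reads as the single clean identity above; no deeper obstacle arises.
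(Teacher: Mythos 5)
Your proof is correct and follows essentially the same route as the paper's: both arguments exploit the eventual nonnegativity and monotonicity of $(-1)^{n+1}\Delta^{n+1}a$ to obtain a telescoping bound of the form $k\,|\Delta^{n+1}a(k')| \le C\,|\Delta^{n}a(k'')|$ for comparable indices (you use $k'=2k$, $k''=k$; the paper uses $k'=k$, $k''=\lfloor k/2\rfloor$, which is the same estimate), and then induct, strengthening the base case to $k\,a(k)\to 0$ via summability for the second claim. Your preliminary remark that $L_n=0$ is harmless but not actually used (the gain-one-power step only needs eventual sign and monotonicity, and the induction starts from the hypothesis $a(k)\to 0$), and your explicit treatment of $k\to-\infty$ via $\Delta^n a(k)=0$ for $k\le -n-1$ makes precise a point the paper leaves implicit.
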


\begin{proof}
Note that $a(k)$ converges to zero as $k \to \infty$ by definition. Furthermore, if $a(k)$ is additionally summable, then
\formula{
 k a(k) & = \sum_{j = 0}^\infty a(k) \ind_{[1, k]}(j) .
}
Since $a(k) \ind_{[1, k]}(j) \le a(j)$ for every $j$, and $a(k) \ind_{[1, k]}(j)$ converges to $0$ as $k \to \infty$, the dominated convergence theorem implies that $k a(k)$ converges to zero as $k \to \infty$.

By the eventual monotonicity of $\Delta^{n + 1} a(k)$, for $k$ large enough we have
\formula{
 (-1)^n \Delta^n a(\lfloor \tfrac{k}{2} \rfloor) & \ge (-1)^n \Delta^n a(k) - (-1)^n \Delta^n a(\lfloor \tfrac{k}{2} \rfloor) \\
 & = \sum_{j = 1}^{\lceil k/2 \rceil} (-1)^{n + 1} \Delta^{n + 1} a(k - j) \\
 & \ge \lceil \tfrac{k}{2} \rceil (-1)^{n + 1} \Delta^{n + 1} a(k) \ge 0 .
}
Therefore,
\formula{
 |k \Delta^{n + 1} a(k)| & \le 2 |\Delta^n a(\lfloor \tfrac{k}{2} \rfloor)|
}
for $k$ large enough.

The desired results follow now easily by induction.
\end{proof}

By the discrete counterpart of Rolle's theorem, the sequence $\Delta a(k)$ changes sign at least once between each two consecutive changes of sign of $a(k)$. If additionally $a(k)$ converges to zero as $k \to \pm\infty$, then $\Delta a(k)$ additionally changes sign at least once before the first sign change of $a(k)$, and at least once after the last sign change of $a(k)$. By induction, we find that if $a(k)$ converges to zero as $k \to \pm\infty$, then $\Delta^n a(k)$ changes sign at least $n$ times for $n = 0, 1, 2, \ldots$ Consequently, in order to show that such a sequence is bell-shaped, we only need to show that $\Delta^n a(k)$ changes sign at most $n$ times for $n = 0, 1, 2, \ldots$

%
%                            ---------- o ----------
%

\section{Generating functions of bell-shaped sequences}
\label{sec:repr}

In this section we prove the crucial part of our main result: the implication~\ref{thm:main:a}$\implies$\ref{thm:main:c} in Theorem~\ref{thm:main}. For convenience, we state this implication as a separate result: a representation theorem for generating functions of bell-shaped sequences.

\begin{theorem}\label{thm:repr}
If $(a(k))$ is a bell-shaped sequence, then the generating function of $(a(k))$ is given by
\formula[eq:repr]{
 F(x) & = \sum_{k = 0}^\infty a(k) x^k = \exp \biggl( b x + c + \int_{-\infty}^\infty \biggl( \frac{1}{s - x} - \frac{s}{1 + s^2} \biggr) \ph(s) ds \biggr)
}
for $x \in (0, 1)$, where $b \in [0, \infty)$, $c \in \R$ and $\ph$ is a nonnegative Borel function on $\R$ such that
 \begin{itemize}
 \item $\ph$ is stepwise decreasing on $(-\infty, 0)$;
 \item $\ph$ is equal to zero on $(0, 1)$;
 \item $\ph$ is increasing-after-rounding on $(1, \infty)$;
 \item $\ph(s) / s^2$ is integrable near $-\infty$ and near $\infty$;
 \item $(1 - \ph(s)) / (s - 1)$ is nonintegrable in a right neighbourhood of $1$.
 \end{itemize}
\end{theorem}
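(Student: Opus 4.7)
The strategy follows that of \cite{js,kwasnicki,ks} for bell-shaped functions, suitably adapted to the discrete setting. The main engine is the discrete Post inversion formula (Theorem~\ref{thm:post}), which I would apply to a moment sequence $A$ derived from $F$ (or from $a(k)$) in order to express $F(x)$ as a limit of the form $c_n (-1)^n \Delta^n A(j_n)$ with $j_n/n \to x/(1-x)$. Bell-shapedness of $a(k)$ means that $(-1)^n \Delta^n a(k)$ changes sign exactly $n$ times, and the plan is to transfer this sign-change information to the approximants appearing under the Post limit.

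Exploiting the sign-change structure of the expression inside the limit, I would factor it, at each fixed $n$, as a generating function of a finite Pólya frequency sequence $P_n$ times a strictly positive factor $Q_n$---this is the factorisation labeled \eqref{eq:bs:proof:3} in the remark following Theorem~\ref{thm:main}. By Section~\ref{sec:pf:pick}, $\log P_n$ admits an exponential Pick representation with a density $\ph_n^{\mathrm{PF}}$ that is stepwise monotone on the appropriate pieces of $\R$; similarly $\log Q_n$ should admit such a representation with density $\ph_n^{\mathrm{CM}}$ taking values in $[0,1]$ and supported where $\ph_n^{\mathrm{PF}}$ does not contribute. Combining the two yields a Pick-type representation of $\log F$ minus a linear $b_n x$ term at the pre-limit level, with joint density $\ph_n := \ph_n^{\mathrm{PF}} + \ph_n^{\mathrm{CM}}$ which is decreasing and integer-valued on $(-\infty, 0)$, equal to zero on $[0, 1]$, and increasing-after-rounding on $(1, \infty)$.

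The main obstacle is the passage to the limit. As stressed in the remark, unlike in the continuous case $Q_n$ does not converge to the generating function of a completely monotone sequence, so one cannot treat the two factors separately and recombine. Instead I would track the joint density $\ph_n$ throughout, using the tail estimates from Lemma~\ref{lem:bell:bound} (in particular $k^n \Delta^n a(k) \to 0$) to obtain uniform control on $\ph_n(s)/s^2$ near $\pm\infty$, and then extract a vaguely convergent subsequence of the measures $\ph_n(s)\,ds$. Lemma~\ref{lem:vague}\ref{lem:vague:a}--\ref{lem:vague:b} ensures that the limit measure has a density $\ph$ which is stepwise decreasing on $(-\infty, 0)$ and increasing-after-rounding on $(1, \infty)$, and the convergence theory for Pick functions recalled earlier then produces the limiting constants $b \in [0, \infty)$ and $c \in \R$ together with the integral representation \eqref{eq:repr}.

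It remains to verify the boundary conditions on $\ph$. That $\ph = 0$ on $[0,1]$ follows because $F$ is positive and continuous on $(0,1)$, so the boundary values of $\Arg F$ vanish there; integrability of $\ph(s)/s^2$ near $\pm\infty$ is inherited from the uniform pre-limit bounds; and nonintegrability of $(1 - \ph(s))/(s - 1)$ in a right neighbourhood of $1$ encodes the assumption $a(k) \to 0$, equivalently $(1 - x) F(x) \to 0$ as $x \to 1^-$, by exactly the computation used in Section~\ref{sec:cm:pick} for the completely monotone case. The truly difficult point, as highlighted, is the joint factorisation at pre-limit level and its stability under vague limits, since the Pólya-frequency and completely-monotone components cannot be disentangled at any finite stage.
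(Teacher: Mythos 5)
Your proposal follows the paper's route in outline, but there are two substantive gaps. First, the claim that $\log Q_n$ (in your notation, essentially $\log H_n$ with $H_n$ the Stieltjes-function factor) has a Pick density $\ph_n^{\mathrm{CM}}$ ``supported where $\ph_n^{\mathrm{PF}}$ does not contribute'' is false, and this is exactly the obstruction the paper's Remark flags as the crucial novelty. In the variable $y$, the P\'olya-frequency part contributes a density $\ind_{(-\infty,-n)} + \sum_{m<n}\ind_{[-\alpha_{n,m},-m)}$, while the Stieltjes factor contributes $-\psi_n$ with $\psi_n = \sum_{m\ge 0}\ind_{[-\beta_{n,m},-m)}$; these supports overlap heavily. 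The sum $\ph_n$ is only nonnegative with the required stepwise-decreasing/increasing-after-rounding structure because each upward jump at $-\alpha_{n,m}$ that lands inside $(-n,0)$ is \emph{cancelled} by a matching downward jump at $-\beta_{n,m'}$ --- the paper proves this in Step~7 by a careful argument distinguishing the cases $H_n(-\alpha_{n,m})<0$ and $\ge 0$, using that $P_n$ vanishes at $\alpha_{n,m}$ so $H_n$ has no pole there. A proof cannot treat the two densities as living on disjoint sets; the cancellation argument must be supplied.

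Second, the passage from the discrete Post limit $G_n(j_n+1)\to F(x)$ to pointwise convergence of the Pick functions $x\mapsto G_n(nx/(1-x))$ on $(0,1)$ is not automatic and is omitted in your sketch. The paper bridges this gap (Step~8) by exploiting that $\log G_n$, being a Pick function on $(0,\infty)$, is increasing and concave, so one can trap $\log G_n(y_n)$ between $\log G_n(j_n+1)$ and a convex combination of $\log G_n(j_n+1)$ and $\log G_n(i_n+1)$ with $i_n=\lfloor nw\rfloor$ for a smaller $w$. Without this interpolation you only have convergence along the integer arguments $j_n+1$, which is not directly compatible with the Pick-function limit machinery you invoke. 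Smaller issues: the integrability of $\ph(s)/s^2$ near $\pm\infty$ is automatic from $\log F$ being a Pick function rather than deduced from the tail estimates of Lemma~\ref{lem:bell:bound} (that lemma is used to kill boundary terms in the $n$-fold summation by parts); and once pointwise convergence of the Pick functions $\log F_n$ on $(0,1)$ is established, no subsequence extraction is needed --- the convergence theory gives full convergence of the representing measures.
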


We first prove the result for summable bell-shaped sequences, and only then we discuss the necessary modifications in the general case.

\begin{proof}[Proof of Theorem~\ref{thm:repr} for summable bell-shaped sequences]
We denote by $\alpha_{n, m}$ the location of the $m$th sign change of $\Delta^n a(k - n)$: we let $\alpha_{n, -1} = -\infty$ and
\formula[eq:bs:proof:alpha]{
 \alpha_{n, m} & = \min\{k > \alpha_{n, m - 1} : (-1)^m \Delta^n a(k - n) > 0\}
}
for $m = 0, 1, 2, \ldots, n - 1$. Note that $0 < \alpha_{n, 0} < \alpha_{n, 1} < \ldots < \alpha_{n, n - 1}$, and thus $\alpha_{n, m} \ge m + 1$.

The argument is broken into ten steps.

\emph{Step 1.} Let $a(k)$ be a summable bell-shaped sequence, and let
\formula{
 F(x) & = \sum_{k = 0}^\infty a(k) x^k
}
be its generating function; here $|x| < 1$. Note that $F$ is continuous on $(0, 1)$, and thanks to summability of $a(k)$, $F$ is additionally bounded on $(0, 1)$. Define the moment sequence
\formula{
 A(k) & = \int_0^1 x^k F(x) dx
}
for $k = 0, 1, 2, \ldots$ By the discrete Post's inversion formula (Theorem~\ref{thm:post}),
\formula[eq:bs:proof:1]{
 F(x) & = \lim_{n \to \infty} (n + j_n + 1) \binom{n + j_n}{n} (-1)^n \Delta^n A(j_n)
}
whenever $x \in (0, 1)$ and
\formula{
 \lim_{n \to \infty} \frac{j_n}{n} & = \frac{x}{1 - x} \, .
}
Below we transform the right-hand side of~\eqref{eq:bs:proof:1}.

\emph{Step 2.} For $k = 0, 1, 2, \ldots$\,, by Fubini's theorem,
\formula{
 A(k) & = \int_0^1 x^k \biggl( \sum_{j = 0}^\infty a(j) x^j \biggr) dx \\
 & = \sum_{j = 0}^\infty \frac{a(j)}{j + k + 1} \, .
}
Therefore,
\formula{
 \Delta^n A(k) & = \sum_{j = 0}^\infty a(j) \Delta_k^n \frac{1}{j + k + 1} \\
 & = \sum_{j = 0}^\infty a(j) \Delta_j^n \frac{1}{j + k + 1} ,
}
where $\Delta_k$ and $\Delta_j$ denote the difference operators acting with respect to variables $k$ and $j$, respectively. Let $P$ be a polynomial of degree at most $n$, and let
\formula{
 Q(j) & = \frac{P(j) - P(-k - 1)}{j + k + 1} \, .
}
Then $Q$ is a polynomial of degree at most $n - 1$, and
\formula{
 \frac{P(-k - 1)}{j + k + 1} & = \frac{P(j)}{j + k + 1} - Q(j) .
}
Additionally, $\Delta^n Q(j) = 0$ for all $j$. Thus,
\formula{
 P(-k - 1) \Delta^n A(k) & = \sum_{j = 0}^\infty a(j) \Delta_j^n \frac{P(-k - 1)}{j + k + 1} \\
 & = \sum_{j = 0}^\infty a(j) \Delta_j^n \frac{P(j)}{j + k + 1} .
}
As usual, we extend $a(k)$ to a two-sided sequence so that $a(k) = 0$ for $k < 0$. The $n$-fold application of summation by parts leads to
\formula*[eq:bs:proof:2]{
 P(-k - 1) \Delta^n A(k) & = \sum_{j = -\infty}^\infty a(j) \Delta_j^n \frac{P(j)}{j + k + 1} \\
 & = \sum_{j = -\infty}^\infty (-1)^n \Delta^n a(j - n) \, \frac{P(j)}{j + k + 1} \, ;
}
here $k = 0, 1, 2, \ldots$\,, and we use the first assertion of Lemma~\ref{lem:bell:bound} to find that for $m = 0, 1, 2, \ldots, n - 1$ the boundary terms
\formula{
 & (-1)^m \Delta^m a(j - m) \Delta_j^{n - m - 1} \frac{P(j)}{j + k + 1}
}
converge to zero as $j \to \infty$.

\emph{Step 3.} By combining the results of the first two steps (formulae~\eqref{eq:bs:proof:1} and~\eqref{eq:bs:proof:2}), we find that for an arbitrary sequence of polynomials $P_n$ of degree at most $n$, we have
\formula*[eq:bs:proof:lim]{
 F(x) & = \lim_{n \to \infty} \frac{n + j_n + 1}{P_n(-j_n - 1)} \binom{n + j_n}{n} \sum_{j = 0}^\infty \Delta^n a(j - n) \, \frac{P_n(j)}{j + j_n + 1} \\
 & = \lim_{n \to \infty} \frac{n + j_n + 1}{n! P_n(-j_n - 1)} \biggl( \prod_{m = 0}^{n - 1} (m + j_n + 1) \biggr) \sum_{j = 0}^\infty \frac{P_n(j) \Delta^n a(j - n)}{j + j_n + 1}
}
for $x \in (0, 1)$, provided that $j_n / n$ converges to $\frac{x}{1 - x}$ as $n \to \infty$. We choose $P_n$ in such a way that $P_n(j) \Delta^n a(j - n) \ge 0$ for all $j = 0, 1, 2, \ldots$ More precisely, we set
\formula[eq:bs:proof:pn]{
 P_n(j) & = \prod_{m = 0}^{n - 1} (\alpha_{n, m} - j) ,
}
where $0 < \alpha_{n, 0} < \alpha_{n, 1} < \ldots < \alpha_{n, n - 1}$ are the locations of sign changes of $\Delta^n a(k - n)$. Thus,
\formula*[eq:bs:proof:3]{
 F(x) & = \lim_{n \to \infty} \frac{n + j_n + 1}{n!} \biggl(\prod_{m = 0}^{n - 1} \frac{m + j_n + 1}{\alpha_{n, m} + j_n + 1} \biggr) \sum_{j = 0}^\infty \frac{P_n(j) \Delta^n a(j - n)}{j + j_n + 1}
}
for $x \in (0, 1)$ and $j_n$ as described above.

\emph{Step 4.} In the right-hand side of~\eqref{eq:bs:proof:3}, the only element that depends on $x$ is the sequence $j_n$: we require that $j_n / n$ converges to $\frac{x}{1 - x}$. We introduce approximations to $F$, by formally replacing $(j_n + 1) / n$ (which also converges to $\frac{x}{1 - x}$) by $\frac{x}{1 - x}$.

More precisely, it will be easier to temporarily fix $n$ and work with variable $y = \frac{n x}{1 - x}$; we will return to the original variable $x$ only in step~9. Thus, we define auxiliary functions $G_n$ by formally replacing $j_n + 1$ by $y$ in the expression under the limit in~\eqref{eq:bs:proof:3}:
\formula[eq:bs:proof:4]{
 G_n(y) & = \frac{n + y}{n!} \biggl( \prod_{m = 0}^{n - 1} \frac{m + y}{\alpha_{n, m} + y} \biggr) \sum_{j = 0}^\infty \frac{P_n(j) \Delta^n a(j - n)}{j + y} \, .
}
With this definition, formula~\eqref{eq:bs:proof:3} simply states that $G_n(j_n + 1)$ converges to $F(x)$ as $n \to \infty$.

\emph{Step 5.} Observe that since $P_n(j) \Delta^n a(j - n) \ge 0$, the series in the right-hand side of~\eqref{eq:bs:proof:4} defines a Stieltjes function
\formula[eq:bs:proof:5]{
 H_n(y) & = \sum_{j = 0}^\infty \frac{P_n(j) \Delta^n a(j - n)}{j + y} \, ;
}
see~\eqref{eq:stieltjes}. The exponential representation~\eqref{eq:stieltjes:exp} of the Pick function $1 / H_n$ reads
\formula[eq:bs:proof:6]{
 H_n(y) & = \exp\biggl( -\delta_n - \int_{-\infty}^0 \biggl( \frac{1}{s - y} - \frac{s}{1 + s^2} \biggr) \psi_n(s) ds \biggr)
}
for some $\delta_n \in \R$ and some Borel function $\psi_n$ with values in $[0, 1]$; here $y \in \C \setminus (-\infty, 0]$. Furthermore, $\psi_n(s)$ is given almost everywhere by the boundary limit
\formula{
 \psi_n(s) & = \lim_{t \to 0^+} \frac{1}{\pi} \, \Arg \frac{1}{H_n(s + i t)} \\
 & = -\lim_{t \to 0^+} \frac{\Arg H_n(s + i t)}{\pi} \, .
}
However, by definition~\eqref{eq:bs:proof:5}, $H_n$ is a meromorphic function, it is real-valued on the real axis, the poles of $H_n$ are located at those numbers $-j$ for which we have $P_n(j) \Delta^n a(j - n) > 0$, $j = 0, 1, 2, \ldots$\,, and $H_n$ is strictly decreasing between every two consecutive poles. This implies that $\psi_n$ only takes values $0$ and $1$, or, more precisely,
\formula{
 \psi_n(s) & = \begin{cases}
  1 & \text{if $H_n(s) < 0$,} \\
  0 & \text{if $H_n(s) \ge 0$.}
 \end{cases}
}
for almost all $s \in \R$. For $m = 0, 1, 2, \ldots$ the function $H_n$ is strictly decreasing on $(-m - 1, -m)$, and hence
\formula[eq:bs:proof:beta]{
 \{ s \in (-m - 1, -m) : H_n(s) < 0 \} & = (-\beta_{n, m}, -m) .
}
for some $\beta_{n, m} \in [m, m + 1]$. It follows that
\formula[eq:bs:proof:7]{
 \psi_n & = \sum_{m = 0}^\infty \ind_{[-\beta_{n, m}, -m)}
}
almost everywhere on $\R$.

\emph{Step 6.} We now derive the exponential representation of the remaining factors in the right-hand side of~\eqref{eq:bs:proof:4}. For simplicity, in this step we simply write $\alpha_m = \alpha_{n, m}$ and $\beta_m = \beta_{n, m}$. For $y \in \C \setminus (-\infty, 0]$ we have
\formula{
 \frac{m + y}{\alpha_m + y} & = \exp\biggl( \int_{-\alpha_m}^{-m} \frac{1}{s - y} \, ds \biggr)
}
and
\formula{
 n + y & = \exp\biggl( \frac{\log(1 + n^2)}{2} + \int_{-\infty}^{-n} \biggl( \frac{1}{s - y} - \frac{s}{1 + s^2} \biggr) ds \biggr) .
}
By combining~\eqref{eq:bs:proof:4}, \eqref{eq:bs:proof:6} and the above two formulae, we find that
\formula[eq:bs:proof:8]{
 G_n(y) & = \exp \biggl( \gamma_n + \int_{-\infty}^0 \biggl( \frac{1}{s - y} - \frac{s}{1 + s^2} \biggr) \ph_n(s) ds \biggr)
}
for $y \in \C \setminus (-\infty, 0]$, where $\gamma_n \in \R$ and
\formula{
 \ph_n & = \ind_{(-\infty, -n)} + \sum_{m = 0}^{n - 1} \ind_{[-\alpha_m, -m)} - \psi_n
}
almost everywhere on $\R$. Using additionally~\eqref{eq:bs:proof:7}, we obtain
\formula{
 \ph_n & = \ind_{(-\infty, -n)} + \sum_{m = 0}^{n - 1} \ind_{[-\alpha_m, -m)} - \sum_{m = 0}^\infty \ind_{[-\beta_m, -m)} \\
 & = \sum_{m = 0}^{n - 1} \bigl(\ind_{[-\alpha_m, -m)} - \ind_{[-\beta_m, -m)}\bigr) + \sum_{m = n}^\infty \bigl(\ind_{[-m - 1, -m)} - \ind_{[-\beta_m, -m)}\bigr)
}
almost everywhere. Recall that $m \le \beta_m \le m + 1 \le \alpha_m$. Thus,
\formula[eq:bs:proof:9]{
 \ph_n & = \sum_{m = 0}^{n - 1} \ind_{[-\alpha_m, -\beta_m)} + \sum_{m = n}^\infty \ind_{[-m - 1, -\beta_m)}
}
almost everywhere on $\R$. Clearly, $\ph_n \ge 0$, and so formula~\eqref{eq:bs:proof:8} asserts that $G_n$ is the exponential of a Pick function on $(0, \infty)$ (with parameter $b$ equal to $\gamma_n$ and measure $\mu$ equal to $\ph_n(s) ds$ in the Stieltjes representation~\eqref{eq:pick}).

\emph{Step 7.} The second term in the right-hand side of~\eqref{eq:bs:proof:9} defines a function which takes values in $\{0, 1\}$ (and hence in $[0, 1]$) on $(-\infty, -n)$, and which is equal to zero on $[-n, 0)$. We claim that the first sum defines a function which is stepwise increasing on $(-\infty, -n)$ and stepwise decreasing on $(-n, 0)$. Since the sum of a stepwise monotone function and a function taking values in $[0, 1]$ is monotone-after-rounding, our claim implies that $\ph_n$ is increasing-after-rounding on $(-\infty, -n)$ and stepwise decreasing on $(-n, 0)$.

We continue to write $\alpha_m = \alpha_{n, m}$ and $\beta_m = \beta_{n, m}$. Clearly, the function $\tilde{\ph}_n$ given by the first term in the right-hand side of~\eqref{eq:bs:proof:9}, that is,
\formula{
 \tilde{\ph}_n & = \sum_{m = 0}^{n - 1} \ind_{[-\alpha_m, -\beta_m)} ,
}
only takes integer values. All upward jumps of $\tilde{\ph}_n$ are located at points $-\alpha_m$, where $m = 0, 1, 2, \ldots, n - 1$, and all downward jumps of $\tilde{\ph}_n$ are located at points $-\beta_m$, where again $m = 0, 1, 2, \ldots, n - 1$. Since $-\alpha_n \le -\beta_n$ and $-n \le -\beta_n$, the function $\tilde{\ph}_n$ is stepwise increasing on $(-\infty, -n)$. Thus, in order to prove our claim, we only need to show that $\tilde{\ph}_n$ is stepwise decreasing on $(-n, 0)$, that is, that every upward jump at $-\alpha_m$ in $(-n, 0)$ is cancelled by some downward jump.

Suppose that $j = \alpha_m < n$. Clearly, $j \ge 1$, and $P_n(j) = \prod_{i = 0}^{n - 1} (\alpha_i - j) = 0$. It follows that $H_n$ does not have a pole at $-j$, and hence $H_n$ is decreasing on $(-j - 1, -j + 1)$. If $H_n(-j) < 0$, then $H_n(s) < 0$ for all $s \in (-j, -j + 1)$, and so
\formula{
 \beta_{j - 1} & = j .
}
Thus, the downward jump of $\ph_n$ at $-\beta_{j - 1} = -j$ cancels the upward jump of $\ph_n$ at $-\alpha_m = -j$, as desired. Similarly, if $H_n(-j) \ge 0$, then $H_n(s) \ge 0$ for all $s \in (-j - 1, -j)$, and therefore
\formula{
 \beta_j & = j .
}
Hence, in this case the downward jump of $\ph_n$ at $-\beta_j = -j$ cancels the upward jump of $\ph_n$ at $-\alpha_m$. Our claim follows.

\emph{Step 8.} Let us denote by $\log G_n$ the exponent in the right-hand side of~\eqref{eq:bs:proof:8}, so that $\log G_n$ is the continuous complex logarithm of the function $G_n$, which is real-valued on $(0, \infty)$. By the result of step~6, $\log G_n$ is a Pick function on $(0, \infty)$, and hence it is increasing and concave on $(0, \infty)$ (this follows immediately from~\eqref{eq:bs:proof:8}). We fix $x \in (0, 1)$, and we define
\formula{
 z & = \frac{x}{1 - x} \, , \\
 w & = \frac{\frac{x}{2}}{1 - \frac{x}{2}} \, , \\
 y_n & = \frac{n x}{1 - x} = n z , \\
 j_n & = \biggl\lfloor \frac{n x}{1 - x} \biggr\rfloor = \lfloor y_n \rfloor = \lfloor n z \rfloor , \\
 i_n & = \biggl\lfloor \frac{\frac{n x}{2}}{1 - \frac{x}{2}} \biggr\rfloor = \lfloor n w \rfloor .
}
Observe that $i_n + 1 < y_n < j_n + 1$ for $n$ large enough. By monotonicity,
\formula{
 \log G_n(y_n) & \le \log G_n(j_n + 1) ,
}
while by concavity,
\formula{
 \log G_n(y_n) & \ge \frac{y_n - (i_n + 1)}{(j_n + 1) - (i_n + 1)} \, \log G_n(j_n + 1) + \frac{(j_n + 1) - y_n}{(j_n + 1) - (i_n + 1)} \, \log G_n(i_n + 1) .
}
By~\eqref{eq:bs:proof:3}, we have
\formula{
 \lim_{n \to \infty} G_n(j_n + 1) & = F(x) , \\
 \lim_{n \to \infty} G_n(i_n + 1) & = F(\tfrac{x}{2}) .
}
It follows that,
\formula{
 \limsup_{n \to \infty} \log G_n(y_n) & \le \lim_{n \to \infty} \log G_n(j_n + 1) = \log F(x) .
}
Furthermore, since $\frac{y_n}{n} = z$, $\frac{j_n + 1}{n} \to z$ and $\frac{i_n + 1}{n} \to w$ as $n \to \infty$, we also have
\formula{
 \liminf_{n \to \infty} \log G_n(y_n) & \ge \lim_{n \to \infty} \biggl(\frac{y_n - (i_n + 1)}{(j_n + 1) - (i_n + 1)} \, \log G_n(j_n + 1) \\
 & \hspace*{10em} + \frac{(j_n + 1) - y_n}{(j_n + 1) - (i_n + 1)} \, \log G_n(i_n + 1) \biggr) \\
 & = \frac{z - w}{z - w} \, \log F(x) + \frac{z - z}{z - w} \, \log F(\tfrac{x}{2}) \\
 & = \log F(x) .
}
We have thus proved that for every $x \in (0, 1)$,
\formula{
 \lim_{n \to \infty} G_n\bigg(\frac{n x}{1 - x}\biggr) & = F(x) .
}

\emph{Step 9.} We return to the original variable $x$: we define $F_n$ by the formula
\formula{
 F_n(x) & = G_n(\tfrac{n x}{1 - x}) .
}
Here $x \in \C \setminus ((-\infty, 0] \cup [1, \infty))$, so that $y = \frac{n x}{1 - x} \in \C \setminus (-\infty, 0]$. By the result of the previous step, the functions $F_n$ converge pointwise to $F$ on $(0, 1)$. We claim that $F_n$ is the exponential of a Pick function on $(0, 1)$, and that the Stieltjes representation of this Pick function is equivalent to
\formula[eq:bs:proof:10]{
 F_n(x) & = \exp \biggl( b_n + \biggl( \int_{-\infty}^0 + \int_1^\infty \biggr) \biggl( \frac{1}{s - x} - \frac{s}{1 + s^2} \biggr) \eta_n(s) ds \biggr)
}
for some $b_n \in \R$, where $\eta_n(s) = \ph_n(\tfrac{n s}{1 - s})$. One way to show this is to observe that both $x \mapsto \frac{n x}{1 - x}$ and the inverse function $y \mapsto \frac{y}{n + y}$ are Pick functions which preserve the real axis, and so composition with the former function is an isomorphism between Pick functions on $(0, \infty)$ and Pick functions on $(0, 1)$. However, we choose a more direct approach.

By~\eqref{eq:bs:proof:8}, for $x \in \C \setminus ((-\infty, 0] \cup [1, \infty))$ we have
\formula{
 F_n(x) & = \exp \biggl( \gamma_n + \int_{-\infty}^0 \biggl( \frac{1 - x}{s (1 - x) - n x} - \frac{s}{1 + s^2} \biggr) \ph_n(s) ds \biggr) .
}
All we need to do is to substitute $s = \frac{n r}{1 - r}$ in the right-hand side, and rearrange the integrand. Let us denote by $\log F$ the exponent in the right-hand side. Thus, we have
\formula{
 \log F_n(x) & = \gamma_n + \biggl(\int_{-\infty}^0 + \int_1^\infty \biggr) \biggl( \frac{(1 - x) (1 - r)}{n r (1 - x) - n x (1 - r)} - \frac{n r (1 - r)}{(1 - r)^2 + n^2 r^2} \biggr) \frac{n \eta_n(r)}{(1 - r)^2} \, dr \\
 & = \gamma_n + \biggl(\int_{-\infty}^0 + \int_1^\infty \biggr) \biggl( \frac{(1 - x) (1 - r)}{n (r - x)} - \frac{n r (1 - r)}{(1 - r)^2 + n^2 r^2} \biggr) \frac{n \eta_n(r)}{(1 - r)^2} \, dr \\
 & = \gamma_n + \biggl(\int_{-\infty}^0 + \int_1^\infty \biggr) \biggl( \frac{1 - x}{(1 - r) (r - x)} - \frac{n^2 r}{(1 - r) ((1 - r)^2 + n^2 r^2)} \biggr) \eta_n(r) dr .
}
Observe that
\formula{
 \frac{1 - x}{(1 - r) (r - x)} & = \frac{1}{r - x} + \frac{1}{1 - r} \, ,
}
and hence, by a straightforward calculation,
\formula{
 & \frac{1 - x}{(1 - r) (r - x)} - \frac{n^2 r}{(1 - r)^3 + n^2 r^2 (1 - r)} \\
 & \qquad = \frac{1}{r - x} + \frac{1 - r - n^2 r}{(1 - r)^2 + n^2 r^2} \\
 & \qquad = \biggl( \frac{1}{r - x} - \frac{r}{1 + r^2} \biggr) + \frac{r}{1 + r^2} + \frac{1 - r - n^2 r}{(1 - r)^2 + n^2 r^2} \, .
}
Therefore,
\formula{
 \log F_n(x) & = c_n + \biggl(\int_{-\infty}^0 + \int_1^\infty \biggr) \biggl( \frac{1}{r - x} - \frac{\sign r}{1 + |r|} \biggr) \eta_n(r) dr
}
for an appropriate $c_n \in \R$, as claimed.

Recall that $\ph_n$ is zero on $(0, \infty)$, $\ph_n$ is increasing-after-rounding on $(-\infty, -n)$ and stepwise decreasing on $(-n, 0)$. Since $\eta_n(s) = \ph_n(\frac{n s}{1 - s})$, we find that $\eta_n$ is zero on $(0, 1)$, $\eta_n$ is increasing-after-rounding on $(1, \infty)$, and $\eta_n$ is stepwise decreasing on $(-\infty, 0)$.

\emph{Step 10.} We are now in position to complete the proof. We already know that the exponent in the right-hand side of~\eqref{eq:bs:proof:10}, which we denote by $\log F_n$, is a Pick function on $(0, 1)$, and that $\log F_n$ converge pointwise on $(0, 1)$ to $\log F$. Thus, $\log F$ is a Pick function on $(0, 1)$ (or, strictly speaking, $\log F$ extends to a holomorphic function on $\C \setminus ((-\infty, 0] \cup [1, \infty))$, which is a Pick function on $(0, 1)$). Additionally, the parameters $c$ and $\tilde{\mu}$ in the modified Stieltjes representation~\eqref{eq:pick:var} of Pick functions $\log F_n$ converge to the corresponding parameters of $\log F$. It follows that
\formula{
 \log F(x) & = c + \int_{\R \cup \{\infty\}} \frac{1 + s x}{s - x} \tilde{\mu}(ds)
}
for $x \in \C \setminus ((-\infty, 0] \cup [1, \infty))$, where $c \in \R$ is the limit of $c_n$, and the measure $\tilde{\mu}(ds)$ on $\R \cup \{\infty\}$ is the weak limit of measures $(1 + s^2)^{-1} \eta_n(s) ds$ as $n \to \infty$. We recall that for $s = \infty$, we understand that $\frac{1 + s x}{s - x} = x$.

We transform the above expression into the usual Stieltjes representation~\eqref{eq:pick}: if $\mu(ds) = (1 + s^2) \tilde{\mu}(ds)$ on $\R$ and $b = \tilde{\mu}(\{\infty\})$, then
\formula{
 \log F(x) & = b x + c + \int_{\R} \biggl( \frac{1}{s - x} - \frac{s}{1 + s^2} \biggr) \mu(ds).
}
for $x \in \C \setminus ((-\infty, 0] \cup [1, \infty))$. Furthermore, $\mu$ is the vague limit of measures $\eta_n(s) ds$ on $\R$. Clearly, $\mu((0, 1)) = 0$. Since functions $\eta_n$ are stepwise decreasing on $(-\infty, 0)$, by Lemma~\ref{lem:vague}\ref{lem:vague:a}, $\mu$ has a density function $\eta$ on $(-\infty, 0)$, and $\eta$ is stepwise decreasing on $(-\infty, 0)$. Similarly, $\eta_n$ are increasing-after-rounding on $(1, \infty)$, and so Lemma~\ref{lem:vague}\ref{lem:vague:b} implies that $\mu$ has a density function $\eta$ on $(1, \infty)$, and $\eta$ is increasing-after-rounding on $(1, \infty)$. Finally, we have $\mu(\{0\}) = \mu(\{1\}) = 0$: perhaps the easiest way to see this is to note that $\eta_n$, and therefore also $\eta$, are in fact stepwise decreasing on $(-\infty, 1)$ and increasing-after-rounding on $(0, \infty)$, and so in particular absolutely continuous on $(-\infty, 1) \cup (0, \infty) = \R$. This completes the proof of~\eqref{eq:repr} (except that the role of $\ph$ is played by the function $\eta$).

The first integrability condition, integrability of $\eta(s) / s^2$ near $-\infty$ and $\infty$, is a consequence of the fact that $\log F$ is a Pick function. For a summable bell-shaped sequence $a(k)$, the other integrability condition is the integrability of $\eta(s) / (1 - s)$ in a right neighbourhood of $1$. This property follows from the monotone convergence theorem by the argument already discussed at the end of Section~\ref{sec:cm:pick}.
\end{proof}

\begin{proof}[Proof of Theorem~\ref{thm:repr} in the general case]
In the proof for summable bell-shaped sequences, summability was only used in the very first step: for a general bell-shaped sequence $a(k)$ the generating function $F(x)$ may fail to be integrable on $(0, 1)$ (namely, when $a(k) / k$ is not summable), and so its moment sequence $A(k)$ is not even well-defined. For this reason, we modify the definition of $A(k)$ to
\formula{
 A(k) & = \int_0^1 (x^k - 1) F(x) dx .
}
The integral in the right-hand side converges, because we have $|(x^k - 1) F(x)| \le k (1 - x) F(x)$, and since $a(k)$ is bounded, the function $(1 - x) F(x)$ is bounded on $(0, 1)$. Furthermore,
\formula{
 -\Delta A(k) & = -\int_0^1 (x^{k + 1} - x^k) F(x) dx = \int_0^1 x^k (1 - x) F(x) dx
}
is the moment sequence of the function $(1 - x) F(x)$, which is bounded and continuous on $(0, 1)$. Thus, we may apply the discrete Post's inversion formula (Theorem~\ref{thm:post}) to find that
\formula[eq:bs:proof:1:alt]{
 (1 - x) F(x) & = \lim_{m \to \infty} (m + i_m + 1) \binom{m + i_m}{m} (-1)^{m + 1} \Delta^{m + 1} A(i_m)
}
whenever $x \in (0, 1)$ and $i_m / m$ converges to $\frac{x}{1 - x}$. But this is in fact equivalent to~\eqref{eq:bs:proof:1} if we substitute $i_m = j_{m + 1}$. Indeed: if $j_n / n$ converges to $\frac{x}{1 - x}$, then $i_m / m$ converges to $\frac{x}{1 - x}$, too, and so~\eqref{eq:bs:proof:1:alt} reads
\formula{
 (1 - x) F(x) & = \lim_{m \to \infty} (m + j_{m + 1} + 1) \binom{m + j_{m + 1}}{m} (-1)^{m + 1} \Delta^{m + 1} A(j_{m + 1}) \\
 & = \lim_{n \to \infty} (n + j_n) \binom{n + j_n - 1}{n - 1} (-1)^n \Delta^n A(j_n) \\
 & = \lim_{n \to \infty} \frac{n}{n + j_n + 1} \times \lim_{n \to \infty} (n + j_n + 1) \binom{n + j_n}{n} (-1)^n \Delta^n A(j_n) \\
 & = (1 - x) \lim_{n \to \infty} (n + j_n + 1) \binom{n + j_n}{n} (-1)^n \Delta^n A(j_n) ,
}
as claimed. The remaining part of the proof is exactly the same as in the summable case, with one exception: the final remark about integrability of $\eta(s) / (1 - s)$ in a right neighbourhood of $1$ needs to be replaced by a similar comment about nonintegrability of $(1 - \eta(s)) / (1 - s)$.
\end{proof}

%
%                            ---------- o ----------
%

\section{Exponential representations}
\label{sec:exp}

The implication~\ref{thm:main:a}$\implies$\ref{thm:main:c} in Theorem~\ref{thm:main} was shown above as Theorem~\ref{thm:repr}. Below we complete the proof of Theorem~\ref{thm:main}. For convenience, the remaining two implications are also phrased as separate results.

First, we combine implication~\ref{thm:main:c}$\implies$\ref{thm:main:b} with the final assertion of Theorem~\ref{thm:main}.

\begin{theorem}\label{thm:fact}
Suppose that $F$ is a function on $(0, 1)$ given by the formula
\formula{
 F(x) & = \exp \biggl( b x + c + \int_{-\infty}^\infty \biggl( \frac{1}{s - x} - \frac{s}{1 + s^2} \biggr) \ph(s) ds \biggr) ,
}
where $b \in [0, \infty)$, $c \in \R$ and $\ph$ is a nonnegative Borel function on $\R$ such that
 \begin{itemize}
 \item $\ph$ is stepwise decreasing on $(-\infty, 0)$;
 \item $\ph$ is equal to zero on $(0, 1)$;
 \item $\ph$ is increasing-after-rounding on $(1, \infty)$;
 \item $\ph(s) / s^2$ is integrable near $-\infty$ and near $\infty$;
 \item $(1 - \ph(s)) / (s - 1)$ is nonintegrable in a right neighbourhood of $1$.
 \end{itemize}
Then $F$ is the generating function of a sequence, which is the convolution of a summable Pólya frequency sequence and a completely monotone sequence which converges to zero.
\end{theorem}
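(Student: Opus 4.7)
The approach is to split the integrand $\ph$ into a sum $\ph = \ph_{\mathrm{PF}} + \ph_{\mathrm{CM}}$ designed so that the corresponding pieces of the exponential representation match the shapes from Sections~\ref{sec:pf:pick} and~\ref{sec:cm:pick}, respectively. This factors the exponential representation of $F$ as a product $F = F_{\mathrm{PF}} F_{\mathrm{CM}}$ on $(0, 1)$; since generating functions of nonnegative sequences converge absolutely on the open unit disk, the product of $F_{\mathrm{PF}}$ and $F_{\mathrm{CM}}$ is the generating function of the convolution of the underlying sequences, yielding the claim.

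The decomposition is natural: put everything from $\ph$ on $(-\infty, 0)$ into $\ph_{\mathrm{PF}}$, and on $(1, \infty)$ split $\ph$ into its integer-valued stepwise increasing part (absorbed into $\ph_{\mathrm{PF}}$) and a $[0, 1]$-valued remainder (sent to $\ph_{\mathrm{CM}}$). Concretely, using the characterization of increasing-after-rounding recalled in Section~\ref{sec:step:rect}, fix points of increase $1 = s_0 \le s_1 \le s_2 \le \ldots$ of $\ph$ on $(1, \infty)$ and set
\[
 \ph_{\mathrm{PF}}(s) = \ph(s) \, \ind_{(-\infty, 0)}(s) + \sum_{k = 0}^\infty k \, \ind_{(s_k, s_{k + 1})}(s) , \qquad \ph_{\mathrm{CM}}(s) = \ph(s) - \ph_{\mathrm{PF}}(s) .
\]
Then $\ph_{\mathrm{PF}}$ is stepwise decreasing on $(-\infty, 0)$, stepwise increasing on $(1, \infty)$, with $\ph_{\mathrm{PF}}(s)/s^2$ integrable at $\pm\infty$, while $\ph_{\mathrm{CM}}$ is supported on $(1, \infty)$ with values in $[0, 1]$.

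The key step is to verify that $s_1 > 1$ can be arranged, equivalently that $\ph \le 1$ on some right neighbourhood of $1$; this is exactly what makes $\ph_{\mathrm{PF}}$ vanish in a right neighbourhood of $1$, which is the condition from Section~\ref{sec:pf:pick} for the Pólya frequency sequence to be \emph{summable}. This follows from the nonintegrability hypothesis: if, on the contrary, $\ph > 1$ in every right neighbourhood of $1$, the increasing-after-rounding structure forces $\ph \ge 1$ on an entire interval $(1, 1 + \eps)$, whence $(1 - \ph(s))/(s - 1) \le 0$ is bounded there, so the integral of $(1 - \ph(s))/(s - 1)$ over a right neighbourhood of $1$ cannot diverge to $+\infty$---contrary to the hypothesis, understood, as in Section~\ref{sec:cm:pick}, as divergence of this integral to $+\infty$. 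Once $s_1 > 1$ is secured, the equality $\ph_{\mathrm{CM}} = \ph$ on $(1, s_1)$ transfers nonintegrability from $\ph$ to $\ph_{\mathrm{CM}}$, so the completely monotone sequence produced by Section~\ref{sec:cm:pick} does converge to zero. Everything else is assembly: for any $b \ge 0$ and any $c_1, c_2 \in \R$ with $c_1 + c_2 = c$, Section~\ref{sec:pf:pick} identifies the exponential built from $b$, $c_1$ and $\ph_{\mathrm{PF}}$ as the generating function of a summable Pólya frequency sequence $a_1$, and Section~\ref{sec:cm:pick} identifies the exponential built from $c_2$ and $\ph_{\mathrm{CM}}$ as the generating function of a completely monotone sequence $a_2$ converging to zero, so $F = F_{\mathrm{PF}} F_{\mathrm{CM}}$ is the generating function of $a_1 * a_2$. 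The main obstacle is precisely the pointwise bound $\ph \le 1$ near $1^+$, extracted from the nonintegrability condition---an essentially ``integral'' hypothesis used here to produce a ``pointwise'' consequence that controls the behaviour of the Pólya frequency factor at the boundary $x = 1$.
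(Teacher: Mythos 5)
Your proposal is correct and follows essentially the same route as the paper's proof: decompose $\ph$ into an integer-valued stepwise part $\ph_{\mathrm{PF}}$ (sent to a Pólya frequency factor via Section~\ref{sec:pf:pick}) and a $[0,1]$-valued part $\ph_{\mathrm{CM}}$ supported on $(1,\infty)$ (sent to a completely monotone factor via Section~\ref{sec:cm:pick}), then multiply generating functions. The one place where you go beyond the paper is in actually \emph{proving} that $\ph \le 1$ on a right neighbourhood of $1$ (so that $\ph_{\mathrm{PF}}$ vanishes near $1^+$ and the PF factor is summable): the paper simply asserts ``we may assume'' this, whereas you correctly deduce it from the nonintegrability hypothesis interpreted, as in Section~\ref{sec:cm:pick}, as divergence to $+\infty$, combined with the increasing-after-rounding structure, and then use $\ph_{\mathrm{CM}}=\ph$ near $1^+$ to transfer the nonintegrability and get $a_2 \to 0$. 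One small slip worth flagging: in that step you write that $(1-\ph(s))/(s-1)$ ``is bounded'' on $(1,1+\eps)$ once $\ph \ge 1$ there, which is not quite right (as $s\to 1^+$ it may tend to $-\infty$); what you actually need, and what your sentence really uses, is only that the ratio is nonpositive there, so its integral cannot diverge to $+\infty$. With that word corrected, the argument is sound and slightly more complete than the paper's.
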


\begin{proof}
As discussed in Section~\ref{sec:step:rect}, every function which is increasing-after-rounding is the sum of a stepwise increasing function and a function with values in $[0, 1]$. Thus, we can write $\ph = \ph_1 + \ph_2$, where:
\begin{itemize}
\item $\ph_1$ only takes nonnegative integer values, it is stepwise decreasing on $(-\infty, 0)$, zero on $(0, 1)$, and stepwise increasing on $(1, \infty)$;
\item $\ph_2$ is zero on $(-\infty, 1)$ and it takes values in $[0, 1]$ on $(1, \infty)$.
\end{itemize}
Furthermore, by the integrability conditions imposed on $\ph$, we may assume that:
\begin{itemize}
\item the function $\ph_1(s) / s^2$ is integrable near $-\infty$ and near $\infty$, and $\ph_1 = 0$ in a right neighbourhood of $1$;
\item $(1 - \ph_2(s)) / (s - 1)$ is nonintegrable in a right neighbourhood of $1$.
\end{itemize}
By the arguments described in Section~\ref{sec:pf:pick}, there exists a summable Pólya frequency sequence $b(k)$ with generating function
\formula{
 G(x) = \sum_{k = 0}^\infty b(k) & = \exp \biggl( b x + c + \int_{-\infty}^\infty \biggl( \frac{1}{s - x} - \frac{s}{1 + s^2} \biggr) \ph_1(s) ds \biggr) ;
}
see~\eqref{eq:pf:exp}. Similarly, by the results of Section~\ref{sec:cm:pick}, there is a completely monotone sequence $c(k)$ which converges to zero, with generating function
\formula{
 H(x) = \sum_{k = 0}^\infty c(k) & = \exp \biggl( \int_{-\infty}^\infty \biggl( \frac{1}{s - x} - \frac{s}{1 + s^2} \biggr) \ph_2(s) ds \biggr) ;
}
see~\eqref{eq:cm:exp}. It follows that $F(x) = G(x) H(x)$, and consequently $F$ is the generating function of the convolution of the Pólya frequency sequence $b(k)$ and the completely monotone sequence $c(k)$.
\end{proof}

It remains to show the implication~\ref{thm:main:b}$\implies$\ref{thm:main:a} of Theorem~\ref{thm:main}.

\begin{theorem}\label{thm:var:dim}
Suppose that $a(k)$ is the convolution of a summable Pólya frequency sequence and a completely monotone sequence which converges to zero. Then $a(k)$ is a bell-shaped sequence.
\end{theorem}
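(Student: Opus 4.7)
The plan is to combine the Rolle-type lower bound on sign changes of $\Delta^n a$ (from the end of Section~\ref{sec:bell}) with the variation-diminishing property of Pólya frequency sequences (Section~\ref{sec:pf}), reducing the problem to an explicit sign analysis of iterated differences of a completely monotone sequence extended by zero.

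Write $a = b * c$ with $b$ a summable Pólya frequency sequence and $c$ a completely monotone sequence converging to zero. Both factors are nonnegative, and $a$ is not identically zero since $a(k_b + k_c) \ge b(k_b) c(k_c) > 0$ for the smallest indices with $b(k_b) > 0$ and $c(k_c) > 0$. The sequence $c$ is nonincreasing (by complete monotonicity) and finite at $0$, hence bounded, so Lebesgue's dominated convergence theorem applied to $a(k) = \sum_j b(j) c(k - j)$ yields $a(k) \to 0$ as $k \to \infty$. Combined with $a(k) = 0$ for $k < 0$, the Rolle-type observation at the end of Section~\ref{sec:bell} implies that $\Delta^n a$ changes sign at least $n$ times for every $n = 0, 1, 2, \ldots$\,, so only the matching upper bound needs to be proved.

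Since differencing commutes with convolution, $\Delta^n a = b * \Delta^n c$, and the variation-diminishing property of the summable Pólya frequency sequence $b$ (Theorem~5.1.5 in~\cite{karlin}) gives that $\Delta^n a$ changes sign no more often than $\Delta^n c$. It therefore suffices to prove the same upper bound for the extended completely monotone sequence $c$. By Hausdorff's theorem, $c(k) = \int_{[0, 1)} s^k \mu(ds)$ for $k \ge 0$, with $\mu$ a nonzero finite measure on $[0, 1)$ (no mass at $1$, because $c(k) \to 0$). Setting $c^{(\rho)}(k) = \rho^k \ind_{k \ge 0}$, one has $c = \int c^{(\rho)} \mu(d\rho)$ and therefore $\Delta^n c(k) = \int \Delta^n c^{(\rho)}(k) \mu(d\rho)$. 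The key claim is that, for every $\rho \in [0, 1)$ and every $n$, the sequence $\Delta^n c^{(\rho)}$ vanishes on $k \le -n - 1$, has sign $(-1)^{n - l}$ at $k = -l$ for $l = 0, 1, \ldots, n$, and has sign $(-1)^n$ for $k \ge 0$, hence changes sign exactly $n$ times within the transient block $\{-n, \ldots, 0\}$. Since these signs do not depend on $\rho$ and the corresponding absolute values at $k \in \{-n, \ldots, 0\}$ are strictly positive for every $\rho \in [0, 1)$ (in particular, $\Delta^n c^{(0)}(-l) = (-1)^{n - l} \binom{n}{l}$), integration against the nonzero measure $\mu$ preserves the pattern, so $\Delta^n c$ also changes sign at most $n$ times.

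The main technical step, and the only real obstacle, is establishing the alternating sign pattern of $\Delta^n c^{(\rho)}$. My plan is to do this by induction on $n$ using the recursion $\Delta^{n + 1} c^{(\rho)}(k) = \Delta^n c^{(\rho)}(k + 1) - \Delta^n c^{(\rho)}(k)$: in the interior of the transient block, the two terms on the right have opposite signs by the inductive hypothesis, so their difference automatically has sign $(-1)^{(n + 1) - l}$; the boundary cases $k = -(n + 1)$ (where $\Delta^n c^{(\rho)}(k) = 0$ and $\Delta^n c^{(\rho)}(k + 1) = 1$ by direct computation) and $k \ge 0$ (where the closed form $\Delta^n c^{(\rho)}(k) = (-1)^n (1 - \rho)^n \rho^k$ is available) close the induction. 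Combining everything, $\Delta^n a$ changes sign at most $n$ times, hence exactly $n$ times by the Rolle lower bound, and $a$ is bell-shaped.
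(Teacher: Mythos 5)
Your proof is correct and follows essentially the same skeleton as the paper's: Rolle gives the lower bound on sign changes, the variation-diminishing property of the Pólya frequency factor $b$ reduces the upper bound to the sequence $\Delta^n c$, and one checks that $\Delta^n c$ changes sign at most $n$ times. The only divergence is in the last step: you derive the exact alternating sign pattern via the Hausdorff representation and an induction on $n$, whereas the paper gets the (weaker but sufficient) bound by a one-line count --- since $\Delta^n c(k)$ vanishes for $k < -n$ and $(-1)^n \Delta^n c(k) \ge 0$ for $k \ge 0$ by complete monotonicity, the nonzero values occupy $\{-n, -n+1, \ldots\}$ with a constant-sign tail, so there can be at most $n$ sign changes.
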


\begin{proof}
Suppose that $a(k)$ is the convolution of a summable Pólya frequency sequence $b(k)$ and a completely monotone sequence $c(k)$ which converges to zero. By Fatou's lemma, $a(k)$ converges to zero as $k \to \infty$. In order to prove that $a(k)$ is bell-shaped, we only need to show that for $n = 0, 1, 2, \ldots$\,, the doubly infinite sequence $\Delta^n a(k)$ changes sign at most $n$ times.

We first observe that $c(k)$ is bell-shaped: the sequence $\Delta^n c(k)$ has constant sign for $k \ge 0$, and it is zero for $k < -n$, and thus it changes sign at most $n$ times. By the variation diminishing property of summable Pólya frequency sequences (see Section~\ref{sec:pf}), for every $n = 0, 1, 2, \ldots$\,, the convolution of $\Delta^n c(k)$ and $b(k)$ has at most $n$ sign changes. It remains to observe that this convolution is precisely $\Delta^n a(k)$, and so $a(k)$ is indeed bell-shaped.
\end{proof}

%
%                            ---------- o ----------
%

\section{Whale-shaped sequences}
\label{sec:whale}

In Section~1.4 of~\cite{ks}, the authors introduce the notion of a \emph{whale-shaped function}, which is an intermediate concept between complete monotonicity and bell-shape. A smooth positive-valued function $f$ on $(0, \infty)$ is said to be whale-shaped of order $d \in \{0, 1, 2, \ldots, \infty\}$ if $f$ converges to $0$ at infinity, $f^{(n)}$ changes sign $\min\{n, d\}$ times on $(0, \infty)$ for $n = 0, 1, 2, \ldots$\,, and additionally $\lim_{x \to 0^+} f^{(n)}(x) = 0$ for $n = 0, 1, 2, \ldots, d - 1$. Note that whale-shaped functions of order $0$ are precisely completely monotone functions on $(0, \infty)$, while whale-shaped functions of infinite order are precisely strictly positive bell-shaped functions on $(0, \infty)$.

We define the corresponding notion of \emph{whale-shaped sequences} in the following way. A (one-sided) strictly positive sequence $a(k)$ is whale-shaped of order $d \in \{0, 1, 2, \ldots, \infty\}$ if $a(k)$ converges to zero, and the sequence $\Delta^n a(k)$, restricted to $k \ge -d$, changes sign $\min\{n, d\}$ times for $n = 0, 1, 2, \ldots$\, Here, as usual, we extend the definiton of $a(k)$ to integer $k$ by setting $a(k) = 0$ for $k < 0$.

Note that for a finite order $d$, one can phrase the main part of the above definition in the following equivalent way: after padding the sequence $a(k)$ with $d$ zeroes on the left, the $n$th iterated difference of this one-sided sequence changes sign $\min\{n, d\}$ times for $n = 0, 1, 2, \ldots$\, In this reformulation the sign-change condition is a direct analogue of the corresponding condition for whale-shaped functions, while padding with $d$ zeroes on the left is the discrete counterpart of the boundary condition $\lim_{x \to 0^+} f^{(n)}(x) = 0$ for $n = 0, 1, \ldots, d - 1$.

Note that whale-shaped sequences of order $0$ are precisely completely monotone sequences, while whale-shaped sequences of infinite order coincide with strictly positive bell-shaped sequences. In contrast to the case of whale-shaped functions, it is easy to see that the class of whale-shaped functions of order $d$ increases with~$d$.

Below we prove the following result, which is an analogue of the corresponding characterisation of whale-shaped functions given in Theorem~1.13 in~\cite{ks}.

\begin{theorem}
\label{thm:whale}
For a nonnegative sequence $a(k)$ and $d \in \{0, 1, 2, \ldots\}$, the following are equivalent:
\begin{enumerate}[label={\rm(\alph*)}]
\item\label{thm:whale:a} the sequence $a(k)$ is whale-shaped of order $d$;
\item\label{thm:whale:b} $a(k)$ is the convolution of a completely monotone sequence which converges to zero and no more than $d$ geometric sequences with quotient in $(0, 1)$;
\item\label{thm:whale:c} the generating function of $a(k)$ is given by the formula
\formula[eq:whale]{
 \sum_{k = 0}^\infty a(k) x^k & = \exp \biggl( c + \int_1^\infty \biggl( \frac{1}{s - x} - \frac{s}{1 + s^2} \biggr) \ph(s) ds \biggr)
}
for $x \in (0, 1)$, where $c \in \R$ and $\ph$ is a nonnegative Borel function on $(1, \infty)$ such that:
 \begin{itemize}
 \item $\ph$ is increasing-after-rounding and bounded from above by $d + 1$;
 \item $(1 - \ph(s)) / (1 - s)$ is nonintegrable in a right neighbourhood of $1$.
 \end{itemize}
\end{enumerate}
Furthermore, the right-hand side of~\eqref{eq:whale} is the generating function of a bell-shaped sequence whenever the conditions on $c, \ph$ listed in item~\ref{thm:whale:c} are satisfied.
\end{theorem}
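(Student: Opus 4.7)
My overall plan is to establish the cyclic implications (c)$\Rightarrow$(b)$\Rightarrow$(a)$\Rightarrow$(c); the final ``Furthermore'' clause then follows from the path (c)$\Rightarrow$(b)$\Rightarrow$(a). The direction (c)$\Rightarrow$(b) is straightforward from the factorisation apparatus of Sections~\ref{sec:cm:pick} and~\ref{sec:pf:pick}; (b)$\Rightarrow$(a) adapts Theorem~\ref{thm:var:dim} with an induction on the number of geometric factors; and (a)$\Rightarrow$(c), the main technical content, follows the proof of Theorem~\ref{thm:repr} with one essential new ingredient.

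For (c)$\Rightarrow$(b), given $\varphi$ as in~\ref{thm:whale:c}, I split $\varphi = \varphi_1 + \varphi_2$ on $(1, \infty)$ (as in Section~\ref{sec:step:rect}), where $\varphi_1$ is the stepwise increasing integer-valued part and $\varphi_2$ takes values in $[0, 1]$. The bound $\varphi \leq d + 1$ lets me arrange $\varphi_1 \leq d$, so $\varphi_1 = \sum_{m=1}^{d'} \ind_{(1/p_m, \infty)}$ for some $d' \leq d$ and $p_m \in (0, 1)$. By Section~\ref{sec:pf:pick}, $\varphi_1$ is the exponent function associated with the generating function of the summable Pólya frequency sequence $b = g_{p_1} * \cdots * g_{p_{d'}}$, the convolution of $d'$ geometric sequences. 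By Section~\ref{sec:cm:pick}, $\varphi_2$ corresponds to a completely monotone sequence $c$ converging to zero: since $\varphi_1 = 0$ in a right neighbourhood of $1$ (because all $1/p_m > 1$), nonintegrability of $(1 - \varphi)/(1 - s)$ transfers to $(1 - \varphi_2)/(1 - s)$. Factoring the exponential $F = G \cdot H$ exhibits $F$ as the generating function of $b * c$.

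For (b)$\Rightarrow$(a), I argue by induction on the number $d'$ of geometric factors in $b$. The base case $d' = 0$ reduces to the observation that every completely monotone sequence converging to zero is whale-shaped of every order: using the integral representation $c(k) = \int_{[0, 1)} s^k \mu(ds)$, one computes that $\Delta^n c(-k)$ has sign $(-1)^{n-k}$ for $k = 0, 1, \ldots, n$ (the polynomials $\Phi_{n, k}(s) = \sum_{l} (-1)^l \binom{n}{l+k} s^l$ are strictly positive on $(0, 1)$), yielding exactly $\min\{n, d\}$ sign changes on $k \geq -d$ for every $d$. For the inductive step, if $a = a_0 * g_p$ with $a_0$ whale-shaped of order $m - 1$, the recurrence $a_0(k) = a(k) - p a(k - 1)$ combined with the variation diminishing property of $g_p$ (Section~\ref{sec:pf}) and the fact (Rolle combined with variation diminishing) that $\Delta^n a$ has exactly $n$ sign changes on $\Z$ lets me propagate sign changes through the boundary region and obtain the required count $\min\{n, m\}$ on $k \geq -m$.

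The main obstacle is (a)$\Rightarrow$(c), which I prove by following the ten-step argument of Theorem~\ref{thm:repr} nearly verbatim, with one essential addition: the a priori bound $\varphi_n \leq d + 1$ on $(-\infty, -n)$, which will survive in the limit as $\eta \leq d + 1$ on $(1, \infty)$. The whale-shaped structure forces, for $n > d$, exactly $d$ of the sign-change locations $\alpha_{n, 0} < \cdots < \alpha_{n, n-1}$ of $\Delta^n a(k - n)$ to correspond to sign changes of $\Delta^n a$ on $j = k - n \geq -d$; equivalently, $\alpha_{n, m} \geq n - d + 1$ for $m \in \{n - d, \ldots, n - 1\}$, while the remaining $\alpha_{n, m}$ for $m \in \{0, \ldots, n - d - 1\}$ lie in $[m + 1, n - d]$. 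In the formula $\varphi_n = \sum_{m=0}^{n-1} \ind_{[-\alpha_{n,m}, -\beta_{n,m})} + \sum_{m=n}^\infty \ind_{[-m-1, -\beta_{n,m})}$ from step~6 of Theorem~\ref{thm:repr}, only indicators with $\alpha_{n, m} > n$ contribute on $(-\infty, -n)$; at most $d$ such indicators come from the first sum (since only the ``large'' $\alpha$'s can exceed $n$), and the disjointness of the intervals $[-m-1, -\beta_{n, m}) \subseteq [-m-1, -m)$ guarantees at most one contribution from the second sum at each point, giving $\varphi_n \leq d + 1$. The substitution $s = nr/(1 - r)$ in step~9 transfers this to $\eta_n \leq d + 1$ on $(1, \infty)$, and the bound is preserved under vague convergence of measures by the general principle noted in Section~\ref{sec:step:rect}. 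The remaining structural and integrability conditions on the limit density $\eta$ are obtained exactly as in Theorem~\ref{thm:repr}; the most delicate point will be cleanly justifying the localisation of the $\alpha_{n, m}$ in the whale-shaped setting, particularly the edge case $n \leq d$ (where all $n$ indicators may contribute, but the bound still holds because $n \leq d$).
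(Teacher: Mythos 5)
Your approach to (c)$\Rightarrow$(b) and (b)$\Rightarrow$(a) is broadly in line with the paper (the paper's (b)$\Rightarrow$(a) isolates the interlacing property as Lemma~\ref{lem:geometric} and tracks the sign-change positions of $\Delta^n a(k-n)$ by induction on $d$; your sketch ``propagate sign changes through the boundary region'' would need to be fleshed out to an interlacing statement of exactly this kind, but the idea is the same). The real problem is in (a)$\Rightarrow$(c).

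You treat the only new ingredient as the a priori bound $\ph_n \le d+1$, but formula~\eqref{eq:whale} differs from~\eqref{eq:repr} in \emph{two} other ways: there is no $bx$ term, and the integral runs over $(1, \infty)$ only, i.e.\ the stepwise-decreasing part of $\ph$ on $(-\infty, 0)$ must disappear entirely. Running the ten steps of Theorem~\ref{thm:repr} verbatim gives $\ph_n$ which is stepwise decreasing on $(-n, 0)$ and, after substitution, $\eta_n$ stepwise decreasing on $(-\infty, 0)$. Nothing in your argument forces $\eta_n \to 0$ on $(-\infty, 0)$: the indicators $\ind_{[-\alpha_{n,m}, -\beta_{n,m})} = \ind_{[-m-1, -\beta_{n,m})}$ for $m < n-d$ (using $\alpha_{n,m} = m+1$, which you weaken to $\alpha_{n,m} \in [m+1, n-d]$ — in fact equality is forced since $n-d$ alternating sign changes must occupy $k = 1, \ldots, n-d$) live in $(-n+d, 0)$ and can be nontrivial depending on the sign of $H_n$ at integer points, so the limit density $\eta$ on $(-\infty, 0)$ could be nonzero. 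The same bounded-density argument that gives you $b = 0$ does not kill this contribution.

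The paper's fix is to observe that $G_n$ and $\ph_n$ are unchanged under replacing the interpolating polynomial $P_n$ (roots at $\alpha_{n,m}$) by a different degree-$n$ polynomial, and to use the shifted roots $\tilde\alpha_{n,m} = m$ for $m < n - d$ (keeping $\alpha_{n,m}$ for $m \ge n - d$). With this $\tilde P_n$, the term $j=0$ in the definition of the Stieltjes function $H_n$ vanishes, $\tilde H_n$ has no poles in $(-(n-d), \infty)$ and is strictly positive there, which forces $\tilde\beta_{n,m} = m$ for $m < n - d$ and hence $\ph_n \equiv 0$ on $(-(n-d), 0)$. After the change of variables this gives $\eta_n \equiv 0$ on $(1 - n/d, 0)$, so $\eta \equiv 0$ on $(-\infty, 0)$ in the limit, while the remaining $d$ indicators plus the tail sum give $\ph_n \le d+1$ as you observed. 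Without this modification your argument only yields a representation with a possibly nonzero Pólya-frequency factor coming from $(-\infty, 0)$, which is a genuine gap relative to what~\ref{thm:whale:c} asserts.
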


The argument is a variant of the proof of Theorem~\ref{thm:main}, and we only describe the necessary modifications. As it was the case with Theorem~\ref{thm:main}, we divide the proof into three parts.

The proof Theorem~\ref{thm:fact} carries over with only minor change: we additionally know that the function $\ph_1$ is zero on $(-\infty, 0)$ and it is bounded from above by $d$ on $(1, \infty)$. Thus, condition~\ref{thm:whale:c} implies condition~\ref{thm:whale:b}, and additionally the final claim of Theorem~\ref{thm:fact} holds true.

In order to see that condition~\ref{thm:whale:b} implies condition~\ref{thm:whale:a}, we essentially follow the proof of Theorem~\ref{thm:var:dim}, but we need the following well-known lemma.

\begin{lemma}
\label{lem:geometric}
If $b(k)$ is a geometric sequence with nonnegative quotient and $c(k)$ is a sequence such that $c(k)$ and $b * c(k)$ change sign $n$ times, at positions $\alpha_1, \alpha_2, \ldots, \alpha_n$ and $\beta_1, \beta_2, \ldots, \beta_n$ (arranged in an increasing order), respectively, then $\alpha_1 \le \beta_1 < \alpha_2 \le \beta_2 < \ldots < \alpha_n \le \beta_n$. Additionally, for an arbitrary bounded sequence $c(k)$, the sequence $b * c(k)$ changes sign at most as many times as the sequence $c(k)$ does.
\end{lemma}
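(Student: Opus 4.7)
The plan is to exploit the simple recursion satisfied by the one-sided geometric sequence $b(k) = q^k \ind_{k \ge 0}$: the identity $b(k) - q b(k - 1) = \ind_{k = 0}$ implies that $d := b * c$ satisfies
\[
 c(k) = d(k) - q \, d(k - 1)
\]
for every integer $k$. Since the quotient $q$ is nonnegative, this single identity carries all the sign information we need. The first step is to fix the canonical positions of the sign changes of $d$: if $d \equiv 0$ the statement is trivial, so otherwise let $\beta_0$ denote the smallest index at which $d$ does not vanish, and for $j \ge 1$ let $\beta_j$ be the smallest index larger than $\beta_{j - 1}$ at which $d$ takes a value of sign opposite to $d(\beta_{j - 1})$.

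The crucial consequence of this choice is that $d(\beta_j - 1)$ is either zero or has the sign of $d(\beta_{j - 1})$, that is, the sign opposite to $d(\beta_j)$. Substituting into the identity and using $q \ge 0$, both terms on the right-hand side of $c(\beta_j) = d(\beta_j) - q \, d(\beta_j - 1)$ contribute in the same direction (the first strictly, the second weakly), so $c(\beta_j)$ inherits the strict sign of $d(\beta_j)$. For $j = 0$ the same holds trivially because $d(\beta_0 - 1) = 0$, giving $c(\beta_0) = d(\beta_0)$. Consequently the signs of $c(\beta_0), c(\beta_1), \ldots, c(\beta_n)$ alternate in lockstep with those of $d(\beta_0), d(\beta_1), \ldots, d(\beta_n)$.

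This alternation forces $c$ to have at least one sign change in each of the intervals $(\beta_{j - 1}, \beta_j]$, $j = 1, \ldots, n$. Under the hypothesis that $c$ has exactly $n$ sign changes, these must account for all of them, so the $j$-th sign change $\alpha_j$ of $c$ lies in $(\beta_{j - 1}, \beta_j]$, which is precisely the interlacing $\alpha_1 \le \beta_1 < \alpha_2 \le \beta_2 < \ldots < \alpha_n \le \beta_n$. The second assertion then comes for free: the same construction applied to any list of $N$ sign changes of $d$ produces at least $N$ sign changes of $c$, so convolution with $b$ never increases the number of sign changes. The only delicate point I expect is the bookkeeping around zeros of $d$, and this is handled automatically by the canonical choice of the $\beta_j$, which guarantees that $d(\beta_j - 1)$ can never have the same strict sign as $d(\beta_j)$ and thereby makes the sign-inheritance argument run without additional case analysis.
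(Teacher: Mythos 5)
Your proof is correct and is essentially the same argument as the paper's. The recursion $c(k)=d(k)-q\,d(k-1)$ that you derive from $b(k)-q\,b(k-1)=\ind_{k=0}$ is precisely the paper's identity $q^{-k}d(k)=\sum_{j=0}^{k}q^{-j}c(j)$ written in difference form, and the sign-inheritance argument at the canonical positions $\beta_j$ is the explicit unpacking of the paper's remark that ``the sequence $q^{-j}c(j)$ must change sign between every two consecutive sign changes of the sequence of its cumulative sums.'' Your version is a bit more self-contained, since you verify the discrete Rolle-type step by hand rather than citing it, and it pins down a concrete witness for each sign change of $c$ (namely $c(\beta_j)$ itself); but the underlying idea is identical, and both proofs share the same implicit reliance on $d$ having a leftmost nonzero index, which is justified in the one-sided setting where the lemma is applied.
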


\begin{proof}
Suppose that $b(k) = q^k$ with $q \ge 0$. If $q = 0$ we have $\beta_j = \alpha_j$ and there is nothing to prove. If $q > 0$, then
\formula{
 q^{-k} (b * c)(k) & = q^{-k} \sum_{j = 0}^k q^{k - j} c(j) = \sum_{j = 0}^k q^{-j} c(j) .
}
For the proof of the second part of the lemma, it remains to observe that the sequence $q^{-j} c(j)$ must change sign between every two consecutive sign changes of the sequence of its cumulative sums. The first assertion of the lemma follows by the same argument: if both sequences change sign equally many times, then the locations of sign changes of these sequences necessarily alternate exactly as in the statement of the lemma.
\end{proof}

\begin{proof}[Proof of the implication \ref{thm:whale:b}$\implies$\ref{thm:whale:a} in Theorem~\ref{thm:whale}]
Suppose that $c(k)$ is a completely monotone sequence, and $b_1(k), b_2(k), \ldots, b_d(k)$ are geometric sequences with quotient in $[0, 1)$; the quotient equal to $0$ corresponds to the sequence $(1, 0, 0, \ldots)$, the neutral element with respect to convolution. We need to prove that the sequence $a(k) = b_1 * b_2 * \ldots * b_d * c(k)$ is whale-shaped of order $d$.

Fix $n = 0, 1, 2, \ldots$\, Observe that
\formula[eq:whale:proof:1]{
 \Delta^n a(k - n) & = b_1 * b_2 * \ldots * b_d * \Delta^n c(k - n) .
}
The one-sided sequence $a(k - n)$ stars with $n$ zeroes. Thus, by the discrete analogue of Rolle's theorem, its $n$th iterated difference necessarily changes sign at least $n$ times. Similarly, the one-sided sequence $\Delta^n c(k - n)$ changes sign at least $n$ times, and since $c(k)$ is a completely monotone sequence, there are exactly $n$ sign changes, at positions $k = 1, 2, \ldots, n$. On the other hand, using~\eqref{eq:whale:proof:1}, Lemma~\ref{lem:geometric} and induction with respect to $d$, we find that the sequence $\Delta^n a(k - n)$ changes sign at most $n$ times, and hence exactly $n$ times: at $k = 1, 2, \ldots, n - d$, and additionally $d$ times for $k > n - d$. This, however, proves that $a(n)$ is whale-shaped of order~$d$.
\end{proof}

Finally, we prove that condition~\ref{thm:whale:a} implies condition~\ref{thm:whale:c}, by following closely the proof of Theorem~\ref{thm:repr}.

\begin{proof}[Proof of the implication \ref{thm:whale:a}$\implies$\ref{thm:whale:c} in Theorem~\ref{thm:whale}]
We consider a whale-shaped sequence $a(k)$ of degree $d$ and $n > d$, and we use the notation introduced in the proof of Theorem~\ref{thm:repr}. By definition, the sequence $\Delta^n a(k - n)$ changes sign $d$ times for indices $k > n - d$, so that the remaining $n - d$ sign changes are necessarily located at $k = 1, 2, \ldots, n - d$. Thus, the locations $\alpha_{n, m}$ of sign changes of that sequence, defined in~\eqref{eq:bs:proof:alpha}, satisfy
\formula{
 \alpha_{n, m} & = m + 1 && \text{for $m = 0, 1, 2, \ldots, n - d - 1$.}
}
It follows that the definition~\eqref{eq:bs:proof:pn} of the polynomial $P_n$ reads
\formula{
 P_n(j) & = \prod_{m = 0}^{n - 1} (\alpha_{n, m} - j) \\
 & = (1 - j) (2 - j) \ldots (n - d - j) \prod_{m = n - d}^{n - 1} (\alpha_{n, m} - j) .
}
In the remaining part of the argument, however, we replace $\alpha_{n, m}$ and $P_n$ with
\formula{
 \tilde{\alpha}_{n, m} & =
 \begin{cases}
  m & \text{for $m = 0, 1, 2, \ldots, n - d - 1$,} \\
  \alpha_{n, m} & \text{for $m = n - d, n - d + 1, \ldots, n - 1$,}
 \end{cases}
}
and
\formula{
 \tilde{P}_n(j) & = \prod_{m = 0}^{n - 1} (\tilde{\alpha}_{n, m} - j) \\
 & = (-j) (1 - j) (2 - j) \ldots (n - d - 1 - j) \prod_{m = n - d}^{n - 1} (\alpha_{n, m} - j) .
}
Note that the expression under the limit in~\eqref{eq:bs:proof:lim} does not depend on the choice of the polynomial $P_n$, and hence the above modification does not affect neither the function $G_n$ defined in~\eqref{eq:bs:proof:4}, nor the corresponding function $\ph_n$ determined by~\eqref{eq:bs:proof:8}.

After these modifications, however, the auxiliary function $H_n$ defined in~\eqref{eq:bs:proof:5} takes form
\formula{
 \tilde{H}_n(y) & = \sum_{j = 0}^\infty \frac{\tilde{P}_n(j) \Delta^n a(j - n)}{j + y} \\
 & = \sum_{j = n - d}^\infty \frac{\tilde{P}_n(j) \Delta^n a(j - n)}{j + y} \, .
}
We define the numbers $\tilde{\beta}_{n, m}$ as in~\eqref{eq:bs:proof:beta}:
\formula{
 \{ s \in (-m - 1, -m) : \tilde{H}_n(s) < 0 \} & = (-\tilde{\beta}_{n, m}, -m) .
}
Observe that $H_n$ is positive on $(-n + d, \infty)$. It follows that $\tilde{\beta}_{n, m} = m$ for $m = 0, 1, \ldots, n - d - 1$. Thus, the analogue of~\eqref{eq:bs:proof:9} reads
\formula{
 \ph_n & = \sum_{m = 0}^{n - 1} \ind_{[-\tilde{\alpha}_m, -\tilde{\beta}_m)} + \sum_{m = n}^\infty \ind_{[-m - 1, -\tilde{\beta}_m)} \\
 & = \sum_{m = n - d}^{n - 1} \ind_{[-\tilde{\alpha}_m, -\tilde{\beta}_m)} + \sum_{m = n}^\infty \ind_{[-m - 1, -\tilde{\beta}_m)} .
}
It follows that $\ph_n = 0$ on $(-n + d, 0)$ and $\ph_n \le d + 1$ everywhere. Consequently, the function $\eta_n$ introduced in step~9 of the proof of Theorem~\ref{thm:repr} is equal to zero on $(1 - \tfrac{n}{d}, 0)$ and it is bounded from above by $d + 1$ everywhere. After passing to the limit as $n \to \infty$, we find that the parameter $b$ is necessarily equal to zero, the function $\eta$ is equal to zero on $(-\infty, 0)$, and it is bounded from above by $d + 1$ on $(1, \infty)$. This completes the proof of~\eqref{eq:whale} (again with the role of $\ph$ played by the function $\eta$).
\end{proof}

%
%                            ---------- o ----------
%

\section{Examples}
\label{sec:examples}

Of course, all Pólya frequency sequences are bell-shaped. Thus, in particular, the probability mass functions of geometric distributions, Poisson distributions, binomial distributions, as well as their convolutions, are all bell-shaped.

Similarly, all completely monotone sequences are bell-shaped.

It is straightforward to check that a uniform distribution over $\{0, 1, 2, \ldots, n - 1\}$ is bell-shaped if and only if $n = 1$ or $n = 2$. A finitely supported distribution is bell-shaped if and only if it is the convolution of Bernoulli distributions.

By a rather straightforward calculation, probability mass functions $a(k)$ of negative binomial distributions are bell-shaped. Indeed: the generating function of such a sequence $a(k)$ is given by
\formula{
 F(x) = \sum_{k = 0}^\infty a(k) x^k & = \biggl(\frac{1 - p}{1 - p x}\biggr)^\lambda \\
 & = \exp \biggl(c + \lambda \int_{1/p}^\infty \biggl(\frac{1}{s - x} - \frac{s}{1 + s^2} \biggr) ds \biggr)
}
for some $p \in (0, 1)$ and $\lambda \in (0, \infty)$, and an appropriate constant $c$, namely, $c = \lambda \log(1 - p) - \lambda \log(1 + p^2)$. Thus, $F(x)$ is given by~\eqref{eq:main} with $b = 0$ and $\ph(s) = \lambda \ind_{(1/p, \infty)}(s)$, and so $a(k)$ is bell-shaped. Alternatively, one can observe that $a(k)$ is the convolution of $\lfloor \lambda \rfloor$ geometric sequences with parameter $p$, and a completely monotone sequence: the probability mass function of a negative binomial distribution with parameters $p$ and $\lambda - \lfloor \lambda \rfloor$.

A slightly less obvious examples of bell-shaped sequences are given by probability mass functions $a(k)$ of \emph{discrete stable distributions}. Such sequences are characterised by their generating functions
\formula{
 F(x) = \sum_{k = 0}^\infty a(k) x^k & = \exp(-\lambda (1 - x)^\nu) ,
}
where $\nu \in (0, 1]$ is a parameter which corresponds to the index of stability and $\lambda > 0$ is the shape parameter. When $\nu = 1$, we recover the usual Poisson distribution. For $\nu \in (0, 1)$, we have
\formula{
 F(x) & = \exp \biggl(c + \frac{\lambda \sin(\nu \pi)}{\pi} \int_1^\infty \biggl(\frac{1}{s - x} - \frac{s}{1 + s^2} \biggr) (s - 1)^\nu ds \biggr)
}
for an appropriate constant $c$, and hence $F(x)$ is given by~\eqref{eq:main} with $b = 0$ and $\ph(s) = \frac{\lambda}{\pi} \sin(\nu \pi) (s - 1)^\nu \ind_{(1, \infty)}(s)$. Thus, in either case $a(k)$ is indeed bell-shaped. For a detailed discussion of discrete stable distributions, we refer to the original article~\cite{sv} by Steutel and van Harn, where this class of discrete distributions was introduced.

%
%                            ---------- o ----------
%

%
%                            ---------- o ----------
%


\begin{thebibliography}{00}

\bibitem{bahadur}
R.~R.~Bahadur,
\emph{A Note on Quantiles in Large Samples}.
Ann. Math. Statist. 37(3) (1966): 577--580.

\bibitem{bdk}
F.~Balabdaoui, C.~Durot, F.~Koladjo,
\emph{On asymptotics of the discrete convex LSE of a p.m.f.}
Bernoulli 23(3) (2017): 1449--1480. 

\bibitem{dj}
S.~Dharmadhikari, K.~Joag-Dev,
\emph{Unimodality, Convexity, and Applications}.
Academic Press, 1969.

\bibitem{donoghue}
W.~F.~Donoghue,
\emph{Monotone Matrix Functions and Analytic Continuation}.
Springer-Verlag, Berlin-Heidelberg, 1974.

\bibitem{dhkr}
C.~Durot, S.~Huet, F.~Koladjo, S.~Robin,
\emph{Least-squares estimation of a convex discrete distribution}.
Comput. Statist. Data Anal. 67 (2013): 282--298.

\bibitem{gawronski}
W.~Gawronski,
\emph{On the bell-shape of stable densities}.
Ann. Probab. 12(1) (1984): 230--242.

\bibitem{hirschman}
I.~I.~Hirschman,
\emph{Proof of a conjecture of I.~J.~Schoenberg}.
Proc. Amer. Math. Soc.~1 (1950): 63--65.

\bibitem{hw}
I.~I.~Hirschman, D.~V.~Widder,
\emph{The Convolution Transform}.
Princeton University Press, Princeton, NJ, 1955.

\bibitem{jw}
H.~K.~Jankowski, J.~Wellner,
\emph{Estimation of a discrete monotone distribution}.
Electron. J.~Stat. 3 (2009): 1567--1605.

\bibitem{js}
W.~Jedidi, T.~Simon,
\emph{Diffusion hitting times and the bell-shape}.
Stat. Probab. Lett. 102 (2015): 38--41.

\bibitem{karlin}
S.~Karlin,
\emph{Total positivity. Vol. 1}.
Stanford University Press, Stanford, CA, 1968.

\bibitem{kwasnicki}
M.~Kwaśnicki,
\emph{A new class of bell-shaped functions}.
Trans. Amer. Math. Soc. 373(4) (2020): 2255--2280.

\bibitem{ks}
M.~Kwaśnicki, T.~Simon,
\emph{Characterisation of the class of bell-shaped functions},
Math. Zeitschrift, 301(3) (2022): 2659--2683.

\bibitem{ll}
C.~Lefévre, S. Loisel,
\emph{On multiply monotone distributions, continuous or discrete, with applications}.
J.~Appl. Probab. 50 (2013): 827--847.

\bibitem{rao}
B.~L.~S.~Prakasa Rao,
\emph{Estimation of a unimodal density}.
Sankhya, Ser. A 31 (1969): 23--36.

\bibitem{remling}
C.~Remling,
\emph{Spectral Theory of Canonical Systems}.
De Gruyter, Berlin-Boston, 2018.

\bibitem{ssv}
R.~Schilling, R.~Song, Z.~Vondraček,
\emph{Bernstein Functions: Theory and Applications}.
Studies in Math. 37, De Gruyter, Berlin, 2012.

\bibitem{simon}
T.~Simon,
\emph{Positive stable densities and the bell-shape}.
Proc. Amer. Math. Soc. 143(2) (2015): 885--895.

\bibitem{stanley}
R.~P.~Stanley,
\emph{Log-Concave and Unimodal Sequences in Algebra, Combinatorics, and Geometry}.
Ann. New York Acad. Sci. 576 (1989): 500--535.

\bibitem{sv}
F.~W.~Steutel, K.~van Harn,
\emph{Discrete Analogues of Self-Decomposability and Stability}.
Annals of Probability. 7(5) (1979): 893--899.

\bibitem{widder}
D.~V.~Widder,
\emph{The Laplace transform}.
Princeton University Press, Princeton, 1941.

\end{thebibliography}
\end{document}